\documentclass{amsart}
\usepackage{amsfonts,amsmath,amssymb,amscd,mathrsfs}
\usepackage{xcolor}
\usepackage{verbatim}
\usepackage{enumitem}
\usepackage{hyperref}

\newtheorem{theorem}{Theorem}[section]
\newtheorem{lemma}[theorem]{Lemma}
\newtheorem{corollary}[theorem]{Corollary}
\newtheorem{proposition}[theorem]{Proposition}
\theoremstyle{definition}

\newtheorem{example}[theorem]{Example}

\newtheorem{remark}[theorem]{Remark}

\def \D{\mathbb{D}}

\def \R{\mathbb{R}}
\def \C{\mathbb{C}}
\def \T{\mathbb{T}}

\newcommand{\clb}{\mathcal{B}}

\newcommand{\clm}{\mathcal{M}}
\newcommand{\cln}{\mathcal{N}}

\newcommand{\clk}{\mathcal{K}}

\newcommand{\dist}{\mathrm{dist}}

\newcommand{\essran}{\mathrm{ess\ ran}}
\newcommand{\essinf}{\mathrm{ess\ inf}}
\newcommand{\esssup}{\mathrm{ess\ sup}}

\newcommand{\vp}{\varphi}

\newcommand{\ol}{\overline}
\newcommand\restr[2]{\ensuremath{\left.#1\right|_{#2}}}

\numberwithin{equation}{section}

\subjclass[2020]{47B35, 47A30, 47B32}

\keywords{Dual truncated Toeplitz operators, minimum modulus, reduced minimum modulus, minimum attaining operators, Toeplitz operators, Hankel operators, model spaces}

\begin{document}
\title[On the minimum modulus of dual truncated Toeplitz operators]{On the minimum modulus of dual truncated Toeplitz operators}
\author{Sudip Ranjan Bhuia}
\address{Shiv Nadar IoE, NH91, Tehsil Dadri, Greater Noida, Gautam Buddha Nagar, Uttar Pradesh-201314, India}
\email{sudipranjanb@gmail.com; sudip.bhuia@snu.edu.in}

\author{Ramesh Golla}
\address{Department of Mathematics, IIT Hyderabad, 
Kandi, Sangareddy, Telangana, India, 502\,284}
\email{rameshg@math.iith.ac.in}

\author{Puspendu Nag}
\address{Department of Mathematics, IIT Hyderabad, 
Kandi, Sangareddy, Telangana, India, 502\,284}
\email{ma23resch11003@iith.ac.in; puspomath@gmail.com}

\date{\today}

\begin{abstract}
This article provides a systematic investigation of the minimum modulus of dual truncated Toeplitz operators (DTTOs) $D_{\varphi}$ acting on the orthogonal complement of the model space $\mathcal{K}_u^{\perp}$, where $u$ is a nonconstant inner function and $\varphi \in L^\infty(\T)$. We first establish an explicit formula for the minimum modulus of the compressed shift $S_u$ and its dual $D_u$ in terms of $|u(0)|$, and prove that the minimum is always attained. For normal DTTOs, we derive sharp spectral bounds utilizing the essential range of the symbol and characterize the conditions under which $m(D_{\varphi})$ coincides with the essential infimum of $|\varphi|$. In the general setting, for unimodular $\vp$, we obtain exact formulas and two sided estimates for $m(D_{\varphi})$ by analyzing the norms of associated Toeplitz and Hankel operators restricted to the model space. Finally, we provide several concrete examples to illustrate our results.
\end{abstract}

\maketitle	

\section{Introduction}The study of operators on function spaces has long been a cornerstone of modern analysis, with Toeplitz and Hankel operators serving as primary examples of operators with rich structural and spectral properties \cite{Douglas1998, Peller:Book}. A significant development in this field was the introduction of truncated Toeplitz operators (TTOs) by Sarason \cite{Sarason2007}. These operators arise as compressions of multiplication operators to the model space $\clk_u=H^2 \ominus uH^2$, associated with a nonconstant inner function $u$. The structural richness of TTOs has generated extensive research interest owing to their deep connections with complex symmetry, interpolation problems, and model theory \cite{Garcia2016}. More recently, Ding and Sang \cite{DingSang2018} introduced the class of \textit{dual truncated Toeplitz operators} (DTTOs). For $\vp\in L^\infty$, the DTTO $D_\vp$ is defined on $\clk_u^\perp = uH^2 \oplus H^2_{-}$ by
\[D_\vp = (I - P_{\clk_u}) \restr{M_\vp}{\clk_u^\perp},\]
where $M_\vp$ is the multiplication operator on $L^2(\T)$. The structural theory of DTTOs has seen rapid advancement; for instance, Sang, Qin, and Ding \cite{Sang:Qin:Ding} established a Brown-Halmos type theorem and showed that DTTOs can be realized as $2 \times 2$ Toeplitz--Hankel operator matrices. Further characterizations of their commutants, invariant subspaces, and essential commutation properties were investigated in \cite{Li:Sang:Ding, Wang:Zhao:Zheng}, while Gu \cite{Gu2021} provided fundamental characterizations of their normality and spectral inclusions. Although substantial progress on the algebraic and spectral characterizations of DTTOs has been extensively investigated, the quantitative behavior of these operators specifically their extremal properties remains an active area of inquiry. Two central concepts in this regard are \textit{norm attaining} and \textit{minimum attaining} operators. An operator $T \in \clb(H, K)$ is said to be norm attaining if there exists a unit vector $x$ such that $\|Tx\| = \|T\|$, a property with a long history beginning with the work of Lindenstrauss \cite{Lindenstrauss1963}. Analogously, the \textit{minimum modulus} of $T$ is defined as
\[m(T) = \inf\{\|Tx\| : \|x\|=1\},\]
which measures the distance of the operator from the set of non-invertible operators \cite{GindlerTaylor}. We say $T$ is minimum attaining if this infimum is achieved at some vector in the unit sphere. For instance, Carvajal and Neves investigated operators that achieve their norm or minimum in \cite{Carvajal2012, CarvajalNeves}. Unlike the operator norm, which is strictly positive for every nonzero operator, the minimum modulus may vanish for a nonzero operator. This distinction reveals that norm attainment and minimum attainment exhibit fundamentally different structural behavior. In particular, every compact operator on a Hilbert space attains its norm, whereas a compact operator attains its minimum modulus if and only if it fails to be injective (see \cite[Proposition 1.3]{Carvajal2012}). The study of minimum attaining operators has gained significant attention, particularly regarding their density in $\clb(H,K)$ \cite{SHK:RG2018} and their structural characterization in various operator classes \cite{CarvajalNeves, GaneshRameshSukumar2015}. In the context of function spaces, the extremal behavior of Toeplitz and Hankel operators has been extensively explored. Toeplitz operators that attain their norm were first studied by Brown and Douglas in \cite{Brown:Douglas}.  More recently, Ramesh and Sequeira characterized absolutely norm attaining and absolutely minimum attaining Toeplitz and Hankel operators in \cite{RameshShanola2022, RameshShanola2023}. For DTTOs, the investigation of such properties is relatively new. In \cite{SudipPuspendu2026}, the authors classified the extremal sets and characterized norm attaining DTTOs in terms of the factorization of their inducing symbols, building upon the foundational work in \cite{DingSang2018, Gu2021}.The primary objective of this paper is to provide a systematic investigation into the minimum modulus of dual truncated Toeplitz operators. We first establish a formula for the minimum modulus of the compressed shift $S_u$ on the model space, which serves as a foundation for analyzing the dual compressed shift $D_u$ \cite{CamaraRoss2021}. For the latter, we derive an explicit expression for its minimum modulus in terms of the scalar $|u(0)|$. These results characterize the behavior of compressed shifts acting both on the model space $\clk_u$ and its orthogonal complement $\clk_u^\perp$. A key component of this analysis is Theorem \ref{thm:kernel-dim-equality}, where we prove that if $\dim N(T) = \dim N(T^*)$, then $m(T) = m(T^*)$. This result provides a useful generalization of spectral properties found in \cite{BalaRamesh2020}. As a significant application, we show that this equality of minimum moduli holds for all complex symmetric operators, a class that naturally includes dual truncated Toeplitz operators \cite{Gu2021}. We then extend our study to normal DTTOs, establishing sharp bounds in terms of the essential range of the symbol. Finally, for general symbols, we derive exact formulas for $m(D_\vp)$ using associated Toeplitz and Hankel operator norms \cite{Peller:Book}, supported by concrete examples that demonstrate the sharpness of our estimates.

The remainder of the paper is structured as follows. In Section 2, we establish the necessary preliminaries concerning the geometry of model spaces and the fundamental properties of the minimum and reduced minimum moduli. Section 3 is dedicated to the study of the compressed shift and its dual counterpart, providing explicit computations for their minimum moduli. Section 4 investigates the class of normal dual truncated Toeplitz operators, and provides spectral bounds based on the essential range of the inducing symbol. Finally, in Section 5, we treat the general case. For unimodular symbols, we derive exact formulas and two sided estimates for the minimum modulus of DTTOs  (cf. Theorem \ref{thm:min-modulus-Dphi}) through the analysis of associated Toeplitz and Hankel operators. Furthermore, several concrete examples are discussed to illustrate the sharpness of the obtained bounds and to demonstrate the application of the derived formulas in specific settings. We conclude by discussing the minimum modulus of $B_\vp$ for $\vp\in H^\infty$.

\section{Preliminaries}

Let $ H$ be an infinite dimensional separable complex Hilbert space, and let $\clb( H)$ denote the algebra of all bounded linear operators on $H$. The \emph{kernel} (or null space) and \emph{range} (or range space) of $T$ are denoted by $N(T)$ and $R(T)$, respectively. The basic relations between range and kernel include $R(T)^\perp = N(T^*),\, \ol{R(T)} = N(T^*)^\perp.$ The operator $T$ is said to be \emph{bounded below} if there exists $c>0$ such that
\[
\|Tx\|\geq  c\|x\| \quad\text{for all } x\in H.
\]
This is equivalent to $N(T)=\{0\}$ and $R(T)$ being closed.

A \emph{conjugation} on a Hilbert space $H$ is an antilinear isometry $C: H \to H$ that is also an involution, that is, $C^2 = I$. A bounded linear operator $T \in \clb(H)$ is \emph{complex symmetric} if there exists a conjugation $C$ such that $T = CT^*C$ (cf. \cite{GarciaPutinar2006}).

The \emph{spectrum} of $T\in\clb( H)$ is the set
\[
\sigma(T)=\{\lambda\in\C : T-\lambda I \text{ is not invertible in } \clb( H)\}.
\]

The spectrum is a nonempty compact subset of $\C$ contained in the closed disc $\{z\in\C: |z|\leq  \|T\|\}$.

The \emph{point spectrum} (or set of eigenvalues) of $T$ is defined by
\[
\sigma_p(T)
:=
\{\lambda\in\C:\ N(T-\lambda I)\neq\{0\}\},
\]
that is, $\lambda\in\sigma_p(T)$ if and only if there exists a nonzero vector
$x\in H$ such that $Tx=\lambda x$.
The \emph{approximate point spectrum} of $T$ is the set
\[
\sigma_{\mathrm{ap}}(T)
:=
\left\{\lambda\in\C:\ \exists\,\{x_n\}\subset H,\ \|x_n\|=1,\ 
\|(T-\lambda I)x_n\|\to0\right\}.
\]
For any two fixed non-zero vectors $x, y \in  H$, the rank-one operator, often denoted as $x \otimes y$, is defined by 
\[(x \otimes y)(f) = \langle f, y \rangle x, \quad \text{for all } f \in  H.\]
For $T\in\clb(H)$, the \emph{minimum modulus} of $T$ is defined by
\[
m(T)=\inf_{\|x\|=1}\|Tx\|.
\]
The following basic properties will be used throughout (cf. \cite{CarvajalNeves, GindlerTaylor}):

\begin{proposition}\label{prop:basic-formulas}
Let $T\in\clb( H)$. Then the following statements are true:
\begin{enumerate}
\item $m(T)=0$ if and only if $T$ is not bounded below; equivalently,
      if and only if $R(T)$ is not closed or $N(T)\neq\{0\}$.

\item $m(T^*T)=[m(|T|)]^2$.

\item If $V: H\to  H$ is unitary, then $m(VTV^*)=m(T)$.

\item If $N(T)\neq\{0\}$, then $m(T)=0$, and the minimum modulus is automatically attained.

\item $m(T)=\inf \sigma(|T|)$ equivalently, $m(T)^2=\inf\sigma(T^*T)$.
\item $\lambda\in \sigma_{\mathrm{ap}}(T)$ if and only if $m(T-\lambda I)=0$.
\end{enumerate}
For normal $T\in\clb(H)$, by \cite[Proposition 2.1]{Ramesh2014}, we have $m(T)=\dist (0,\sigma(T))$.
\end{proposition}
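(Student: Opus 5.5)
We prove the six items in order, all from the definition $m(T)=\inf_{\|x\|=1}\|Tx\|$ and the spectral theorem for the positive operator $T^*T$, and then derive the normal case.

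\textbf{Items (1) and (4).} For (1), if $m(T)=c>0$, then $\|Tx\|\geq c\|x\|$ for all $x$ by homogeneity, so $T$ is bounded below. Conversely, bounded below with constant $c$ gives $m(T)\geq c>0$. The equivalence with ``$N(T)=\{0\}$ and $R(T)$ closed'' is the standard fact recalled before the proposition: a Cauchy sequence $Tx_n$ forces $x_n$ to be Cauchy, and the converse follows from the open mapping theorem applied to $T:H\to R(T)$. For (4), take a unit vector $x\in N(T)$; then $\|Tx\|=0\leq m(T)$, so $m(T)=0$ and it is attained at $x$.

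\textbf{Items (2) and (3).} For (3), $V^*$ maps the unit sphere bijectively onto itself, and $\|VTV^*x\|=\|T(V^*x)\|$, so the infima coincide. Item (2) is the core step, which I would do together with (5). Since $\||T|x\|^2=\langle T^*Tx,x\rangle=\|Tx\|^2$, we get $m(|T|)=m(T)$. For a positive operator $A$, the spectral theorem (or the numerical range) gives
\[
\inf\sigma(A)=\inf_{\|x\|=1}\langle Ax,x\rangle .
\]
Moreover $\inf_{\|x\|=1}\|Ax\|=\inf\sigma(A)$ for positive $A$: the inequality $\|Ax\|\geq\langle Ax,x\rangle$ gives one direction, and spectral projections onto $[\lambda_0,\lambda_0+\epsilon]$, with $\lambda_0=\min\sigma(A)$, give approximate eigenvectors for the other. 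Applying this to $A=|T|$ yields $m(T)=m(|T|)=\inf\sigma(|T|)$. Applying it to $A=T^*T$, together with the spectral mapping $\sigma(T^*T)=\sigma(|T|)^2$, yields
\[
m(T^*T)=\inf\sigma(T^*T)=(\inf\sigma(|T|))^2=m(|T|)^2,
\]
which proves (2) and (5).

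\textbf{Item (6).} This is immediate from the definitions: $\lambda\in\sigma_{\mathrm{ap}}(T)$ exactly when there are unit vectors $x_n$ with $\|(T-\lambda I)x_n\|\to0$, that is, exactly when the infimum defining $m(T-\lambda I)$ is $0$.

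\textbf{Normal case.} For normal $T$, the operator $T^*T$ is a continuous function of $T$ via $z\mapsto|z|^2$. Hence $\sigma(T^*T)=\{|z|^2:z\in\sigma(T)\}$, and by (5)
\[
m(T)^2=\min_{z\in\sigma(T)}|z|^2,
\]
so $m(T)=\dist(0,\sigma(T))$.

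The only genuinely nontrivial step is the positive-operator identity $\inf_{\|x\|=1}\|Ax\|=\inf\sigma(A)$, and it is handled by spectral projections as above.
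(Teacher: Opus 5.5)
Your proof is correct. The paper gives no proof of its own here: items (1)--(6) are stated as standard facts with a pointer to Carvajal--Neves and Gindler--Taylor, and the normal case is quoted from Ramesh's Proposition 2.1. Your write-up is therefore a self-contained derivation, not a variant of an argument in the paper.

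What it adds is that it isolates the one nontrivial ingredient. For a positive operator $A$,
\[
\inf_{\|x\|=1}\|Ax\|=\min\sigma(A).
\]
The lower bound comes from $\|Ax\|\ge\langle Ax,x\rangle\ge\min\sigma(A)$. The upper bound comes from unit vectors in the range of the spectral projection of $[\min\sigma(A),\min\sigma(A)+\epsilon]$, which is nonzero because $\min\sigma(A)\in\sigma(A)$. Items (2) and (5) and the normal case then follow from this identity and the spectral mapping theorem for the continuous functional calculus. That theorem gives $\sigma(T^*T)=\{t^2: t\in\sigma(|T|)\}$, and for normal $T$ it gives $\sigma(T^*T)=\{|z|^2: z\in\sigma(T)\}$. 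Citing the literature is simply more economical for facts the paper treats as background.

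One small streamlining is available. Part (5) follows directly from the variational formula
\[
m(T)^2=\inf_{\|x\|=1}\langle T^*Tx,x\rangle=\min\sigma(T^*T)
\]
for a self-adjoint operator. The norm version of the positive-operator identity is then needed only to identify $m(T^*T)$ with $\min\sigma(T^*T)$ in (2).
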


Recall that an operator $T\in\clb(H)$ is said to be \emph{norm attaining} if there exists
$x_0\in S_{ H}$ such that
\[
\|Tx_0\|=\|T\|,
\]
where $S_{ H}$ denotes the unit sphere of $ H$.
We denote by $\cln(H)$ the class of all norm attaining operators on $H$.

An operator $T\in \clb(H)$ is said to be \emph{minimum attaining} if there exists
$x_0\in S_{H}$ such that
\[
\|Tx_0\|
=
m(T)
=
\inf\{\|Tx\|:\ x\in S_{ H}\}.
\]
We denote by $\clm(H)$ the set of all minimum attaining operators on $H$.

When an operator has nontrivial kernel, its minimum modulus is necessarily zero. In such situations it is natural to study the behavior of $T$ on the orthogonal complement of its kernel.

For $T\in\clb( H)$, the \emph{reduced minimum modulus} of $T$ is defined by
\[
\gamma(T)
=
\inf\{\|Tx\| : x\in N(T)^\perp,\, \|x\|=1\}.
\]

Equivalently,
\[
\gamma(T)=\inf\sigma\,\left(\restr{|T|}{N(T)^\perp}\right),
\qquad |T|=(T^*T)^{\frac{1}{2}}.
\]

Thus $\gamma(T)>0$ if and only if $T$ is bounded below on $N(T)^\perp$, that is, there exists $c>0$ such that $\|Tx\|\geq  c\|x\|$ for all $x\in N(T)^\perp$. It is evident that $m(T)\leq \gamma(T)$ and in particular, when $N(T)=\{0\}$, we have $\gamma(T)=m(T)$.

Let $(X,\Sigma,\mu)$ be a measure space. For a measurable function 
$f:X\to\R\cup\{\pm\infty\}$, the \emph{essential infimum} of $f$ is defined by
\begin{align*}
\essinf f
:&=\sup\{a\in\R:\ f(x)\geq  a \text{ for almost every\ }x\in X\}\\
&=\sup\{a\in\R:\ \mu(\{x:\ f(x)<a\})=0\}.
\end{align*}
If $\vp\in L^\infty(X,\mu)$, its \emph{essential range} is the set
\[
\essran(\vp)
:=\{\lambda\in\C:\ \forall\varepsilon>0,\
\mu(\{x:\ |\vp(x)-\lambda|<\varepsilon\})>0\}.
\]
Note that
\[
0\in \essran(\vp)
\quad\Longleftrightarrow\quad
\essinf|\vp|=0,
\]
while $0\notin\essran(\vp)$ if and only if $\essinf|\vp|>0$, equivalently, $\vp^{-1}\in L^\infty(X,\mu)$. It is well known that for $\vp\in L^\infty$, we have $\sigma (\vp)=\essran (\vp)$ and $\|\vp\|_\infty=\sup \{|\lambda|:\,\lambda\in\essran (\vp)  \}$ (cf. \cite[Chapter 2]{Douglas1998}).

Let $\D$ denote the open unit disc and $\T=\partial\D$ the unit circle. We write $L^2:=L^2(\T)$ for the usual Hilbert space of square integrable measurable functions on $\T$ with respect to the normalized Lebesgue measure $\mu$. The Hardy space $H^2:=H^2(\T)$ is the closed subspace of $L^2$ consisting of functions whose negative Fourier coefficients vanish. 

We denote by $L^\infty :=L^\infty(\T)$, the Banach space of all essentially bounded $\mu$-measurable functions on $\T$. Let $H^\infty$ denote the Banach algebra of all bounded analytic functions on the unit disc $\D$, equipped with the norm
\[
\|f\|_\infty=\sup_{z\in\D}|f(z)|.
\]
In terms of boundary functions, often we write $H^\infty:=H^2 \cap L^\infty$. A function $u\in H^\infty$ is called an \emph{inner function} if
\[
|u(\zeta)|=1 \quad \text{for almost every } \zeta\in\T .
\]
By the Maximum Modulus Theorem, every inner function $u$ satisfies $|u(z)|\leq  1$ for all $z\in\D$. Moreover, if $u$ is nonconstant, then $|u(z)| < 1$ for all $z \in \D$.

Let $P:L^2\to L^2$ be the orthogonal projection onto $H^2$, and set $P_{-}:=I-P.$ For $\vp\in L^\infty$, let $M_\vp$ denote multiplication by $\vp$ on $L^2$, and the \emph{Toeplitz operator} with symbol $\vp$ is the bounded operator
\[
T_\vp : H^2 \to H^2, \qquad
T_\vp f = P(\vp f), \qquad f\in H^2 .
\]

The \emph{Hankel operator} with symbol $\vp\in L^\infty$ is the operator
\[
H_\vp : H^2 \to H^2_{-}, \qquad
H_\vp f = P_{-}(\vp f), \qquad f\in H^2 .
\]
Throughout, $S$ denotes the unilateral shift and $S^*$ its adjoint on $H^2$.

Let $u$ be a nonconstant inner function on the unit disc $\D$.  
The associated \emph{model space} is defined by
\[
\clk_u = H^2\ominus uH^2.
\]
It is a closed subspace of $H^2$ that is invariant under the backward shift $S^*$. We denote the orthogonal projection of $L^2$ onto $\clk_u$ by $P_{\clk_u}$.

The orthogonal complement of $\clk_u$ in $L^2$ admits the decomposition
\[
\clk_u^\perp = uH^2 \oplus H^2_- ,\text{ where } H^2_- = P_-(L^2)=\overline{zH^2} .
\]

The space $\clk_u$ is a reproducing kernel Hilbert space.
For each $\lambda\in\D$, the reproducing kernel of $\clk_u$ at $\lambda$ is
\[
k_\lambda^u(z)=\frac{1-\ol{u(\lambda)}\,u(z)}{1-\ol{\lambda}z},
\qquad z\in\D .
\]
That is, for every $f\in \clk_u$,
\[
f(\lambda)=\langle f, k_\lambda^u\rangle .
\]

In particular, at $\lambda=0$, 
\begin{equation}\label{eq:kernel-lambda-zero}
k_0^u(z)=1-\ol{u(0)}\,u(z),
\qquad
\|k_0^u\|^2 = k_0^u(0)=1-|u(0)|^2 .
\end{equation}

The reproducing kernels play a central role in the analysis of operators on $\clk_u$. For example, the compressed shift $S_u=P_{\clk_u}\restr{S}{\clk_u}$ satisfies the rank--one defect identities
\begin{equation}\label{eq:defect-identities}
    I_{\clk_u}-S_u^*S_u=S^*u\otimes S^*u,\qquad
I_{\clk_u}-S_uS_u^*=k_0^u\otimes k_0^u,
\end{equation}
which is fundamental in studying norm and minimum modulus properties of $S_u$.

The \emph{dual truncated Toeplitz operator} (DTTO) with symbol $\vp$ and inner function $u$ is the operator
\[
D_\vp = (I-P_{\clk_u})M_\vp\big|_{\clk_u^\perp},
\]
acting on $\clk_u^\perp$. The operator $D_\vp$ is bounded if and only if $\vp\in L^\infty$ and in this case $\|D_\vp\|=\|\vp\|_\infty$ (cf. \cite[Property 2.1]{DingSang2018}). Also, we have $D_\vp^*=D_{\bar{\vp}}$. Often we write $P_{\clk_u^\perp}:=I-P_{\clk_u}$, the orthogonal projection of $L^2$ onto $\clk_u^\perp$.

The operator $D_\vp$ is complex symmetric with respect to the conjugation $C_u:\clk_u^\perp\rightarrow\clk_u^\perp$ defined by 
\begin{equation}\label{conjugation}
C_uf=u\ol{zf},\quad f\in \clk_u^\perp,
\end{equation} that is, $C_u D_\vp C_u=D^*_\vp$ (cf. \cite[Theorem 2.7]{Gu2021}).

Following \cite{Sang:Qin:Ding}, there exists a unitary map
\begin{equation}\label{eq:unitary-U}
U=
\begin{pmatrix}
M_u & 0\\ 
0 & I
\end{pmatrix}
:
H^2\oplus H^2_{-} \longrightarrow uH^2\oplus H^2_{-}=\clk_u^\perp.
\end{equation}

under which $D_\vp$ is unitarily equivalent to the block operator
\begin{equation}\label{eq:DTTO-block}
U^*D_\vp U=
\begin{pmatrix}
T_\vp & H_{u\bar\vp}^*\\ 
H_{u\vp} & S_\vp
\end{pmatrix},
\end{equation}
where
 $S_\vp=P_{-}\restr{M_\vp}{H^2_{-}}$ is the \emph{dual Toeplitz operator}. The anti-unitary operator $V$ on $L^2$ defined by $(Vf)(w)=\bar{w}\ol{f(w)},\, w\in\T,\, f\in L^2$ satisfies the following properties:
\[
V=V^{-1},
\qquad
VT_\vp=S_{\ol{\vp}} V.
\]

In particular, when $\vp(z)=z$, the dual compressed shift is denoted by $D_u$. By \cite[Lemma~4.1]{CamaraRoss2021}, $D_u$ has the following operator matrix representation
\begin{equation}\label{matrix:DCS}
U^*D_uU=
\begin{pmatrix}
S & H_{u\bar{z}}^*\\ 
0 & Q
\end{pmatrix}:=\widetilde{D_u},
\qquad
Qg=P_{-}(zg),
\end{equation}
which plays a central role in the present work. The spectrum of the \emph{dual compressed shift
operator} $D_u$ satisfies the following dichotomy (cf. \cite[Proposition 3.4]{CamaraRoss2021}):
\[
\sigma(D_u)=
\begin{cases}
\ol{\D}, & \text{if } u(0)=0,\\[4pt]
\T, & \text{if } u(0)\neq 0.
\end{cases}
\]

Moreover, by \cite[Proposition 8]{Camara2020}, the defect identities for $D_u$  are given by
\begin{equation}\label{eq:defect-Du}
D_uD_u^*
=
I-(1-|u(0)|^2)\,u\otimes u ,
\end{equation}
\begin{equation}\label{eq:defect-Du-star}
D_u^* D_u = I - (1-|u(0)|^2) \bar{z} \otimes \bar{z}.
\end{equation}

\begin{corollary}
    Let $u$ be an inner function. The dual truncated Toeplitz operator $D_u$ on $\clk_u^\perp$ is an isometry if and only if $u$ is a constant function of modulus $1$.
\end{corollary}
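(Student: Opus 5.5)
The plan is to read the isometry condition straight off the defect identity \eqref{eq:defect-Du-star}. By that identity, $D_u$ is an isometry, i.e. $D_u^*D_u=I$, exactly when
\[
(1-|u(0)|^2)\,\bar z\otimes\bar z=0 .
\]
First observe that $\bar z$ is a nonzero vector of $\clk_u^\perp$. Indeed, $\bar z\in H^2_-\subset \clk_u^\perp$ and $\|\bar z\|=1$. Hence the rank-one operator $\bar z\otimes \bar z$ is nonzero: applied to $\bar z$ it returns $\bar z$. So $D_u$ is an isometry if and only if $|u(0)|=1$.

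Next, translate $|u(0)|=1$ into a statement about $u$. By the maximum modulus principle, recalled in the preliminaries, a nonconstant inner function satisfies $|u(z)|<1$ on $\D$. So $|u(0)|=1$ forces $u$ to be constant, and since $u$ is inner that constant is unimodular. Conversely, if $u\equiv c$ with $|c|=1$, then $|u(0)|=1$, the defect term vanishes, and $D_u^*D_u=I$.

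One subtlety needs care. The paper's standing assumption is that $u$ is nonconstant, while the corollary explicitly allows constant $u$. For constant unimodular $u$ we have $\clk_u=\{0\}$ and $\clk_u^\perp=L^2$, so $D_u=M_z$ is the bilateral shift, which is unitary. This confirms the "if" direction directly, without relying on \eqref{eq:defect-Du-star} being valid in the degenerate case. For the "only if" direction, $u$ nonconstant gives $|u(0)|<1$, so $D_u^*D_u\bar z=(1-(1-|u(0)|^2))\bar z=|u(0)|^2\bar z$ and $\|D_u\bar z\|=|u(0)|<1=\|\bar z\|$. Thus $D_u$ fails to be an isometry.

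The only potential obstacle is this applicability of the defect identity in the constant case, and the direct computation $D_u=M_z$ on $L^2$ sidesteps it.
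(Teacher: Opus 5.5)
Your proof is correct and is the argument the paper intends. The paper states the corollary without proof, immediately after the defect identity \eqref{eq:defect-Du-star}, from which $D_u^*D_u=I$ if and only if $|u(0)|=1$, if and only if $u$ is a unimodular constant. Your separate check of the degenerate case ($\clk_u=\{0\}$, $D_u=M_z$ on $L^2$) is a worthwhile addition, because the paper's standing hypothesis that $u$ is nonconstant does not literally cover it.
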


\section{Compressed shifts and dual compressed shifts}
In this section, we study the minimum modulus of compressed shift type operators. We begin with the compressed shift $S_u$ acting on the model space $\clk_u$. In this setting, the rank one defect identities satisfied by $S_u$ allow for an explicit analysis of the spectrum and, consequently, of the minimum modulus. These results provide a useful prototype for the arguments that follow.
We then turn to the dual compressed shift acting on $\clk_u^\perp$, whose
structure is more involved but can be effectively analyzed using the block operator representation and the spectral information obtained in the compressed shift case. This approach clarifies both the parallels and the distinctions between the compressed and dual compressed settings.

\subsection{Minimum modulus of compressed shifts}
We begin this section with a simple but essential lemma concerning the spectral behavior of rank one perturbations of the identity. Despite its elementary nature, the lemma plays a crucial role in our analysis and will be invoked repeatedly to compute the spectrum, and therefore the minimum modulus, of operators arising from rank-one defect identities such as those satisfied by compressed shifts.

\begin{lemma}\label{lem:spectrum-perturbation}
Let $ H$ be a complex Hilbert space and $x\in H$ be a nonzero vector. For $\alpha,\beta\in\C\setminus\{0\}$, define the rank one perturbation of the identity operator
\[
T := \alpha I + \beta\, x\otimes x.
\]

Then the spectrum of $T$ is given by
\[
\sigma(T)=\{\alpha,\, \alpha+\beta\|x\|^{2}\}.
\]
\end{lemma}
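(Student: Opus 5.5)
Since $T-\lambda I=(\alpha-\lambda)I+\beta\,x\otimes x$ for every $\lambda\in\C$, I would determine directly for which $\lambda$ this operator fails to be invertible, splitting into the cases $\lambda=\alpha$ and $\lambda\neq\alpha$. The two values in the claimed spectrum are distinct because $\beta\neq 0$ and $x\neq 0$.

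\emph{The two claimed points lie in $\sigma(T)$.} For $\lambda=\alpha+\beta\|x\|^2$, compute $Tx=\alpha x+\beta\langle x,x\rangle x=(\alpha+\beta\|x\|^2)x$. So $x$ is an eigenvector, and $\alpha+\beta\|x\|^2\in\sigma_p(T)\subseteq\sigma(T)$. For $\lambda=\alpha$, we have $T-\alpha I=\beta\,x\otimes x$, a rank-one operator. Since $H$ is infinite dimensional (the standing assumption of the Preliminaries; $\dim H\ge 2$ would suffice), it is not invertible: any nonzero $y\perp x$ lies in its kernel. Hence $\alpha\in\sigma_p(T)$. This is the only place where a dimension hypothesis enters, and I will remark that for $\dim H=1$ the point $\alpha$ would have to be dropped.

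\emph{No other point lies in $\sigma(T)$.} Suppose $\lambda\neq\alpha$ and $\lambda\neq\alpha+\beta\|x\|^2$. Put $c=\alpha-\lambda\neq0$ and write $T-\lambda I=c\bigl(I+\tfrac{\beta}{c}\,x\otimes x\bigr)$. Set $\delta:=1+\tfrac{\beta}{c}\|x\|^2$; then $\delta\neq 0$ exactly because $\lambda\neq\alpha+\beta\|x\|^2$. I would then verify the Sherman--Morrison type formula
\[
\Bigl(I+\tfrac{\beta}{c}\,x\otimes x\Bigr)^{-1}=I-\tfrac{\beta}{c\,\delta}\,x\otimes x .
\]
The verification uses the identity $(x\otimes x)^2=\|x\|^2\,x\otimes x$ and multiplies out on both sides, which is routine. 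It shows that $T-\lambda I$ has a bounded two-sided inverse, so $\lambda\notin\sigma(T)$.

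Combining the two steps gives $\sigma(T)=\{\alpha,\alpha+\beta\|x\|^2\}$. The only delicate points are the dimension caveat at $\lambda=\alpha$ and checking that the nonvanishing of the scalar $\delta$ corresponds exactly to $\lambda$ avoiding the second eigenvalue. I do not expect a genuine obstacle beyond these.
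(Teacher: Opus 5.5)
Your proof is correct, but it takes a different route from the paper. The paper's entire proof is the remark that the lemma follows from the spectral mapping theorem. Write $T=p(x\otimes x)$ with $p(t)=\alpha+\beta t$, and note that $x\otimes x=\|x\|^2P_x$, where $P_x$ is the orthogonal projection onto $\mathrm{span}\{x\}$. Then $\sigma(x\otimes x)=\{0,\|x\|^2\}$, so $\sigma(T)=p(\sigma(x\otimes x))$.

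That argument is shorter, but it relies on the polynomial spectral mapping theorem and on the unstated computation of $\sigma(x\otimes x)$. Your argument instead checks the two eigenvalues directly and writes down the resolvent explicitly via the Sherman--Morrison formula, so it is self-contained.

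Your route also makes visible a dimension hypothesis that the paper's one-liner hides. The claim $0\in\sigma(x\otimes x)$ needs $\dim H\ge 2$, exactly as your kernel argument at $\lambda=\alpha$ does. This caveat matters here, because the lemma is later applied on $\clk_u$, which is one-dimensional when $u$ is a single Blaschke factor. In that case $\sigma(S_u^*S_u)=\{|u(0)|^2\}$ rather than $\{1,|u(0)|^2\}$. The infimum is unchanged, so the conclusion of Theorem~\ref{thm:minmod-compressed-shift} still holds.
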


\begin{proof}
Easily follows by the spectral mapping theorem.
\end{proof}

\begin{theorem}\label{thm:kernel-dim-equality}
Let $T \in \clb( H)$ such that $\dim N(T) = \dim N(T^*)$. Then
\[
m(T) = m(T^*).
\]
\end{theorem}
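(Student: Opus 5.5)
The plan is to show that $T^*T$ and $TT^*$ are unitarily equivalent, and then to apply Proposition \ref{prop:basic-formulas}(5), which gives $m(T)^2=\inf\sigma(T^*T)$ and $m(T^*)^2=\inf\sigma(TT^*)$. Since unitarily equivalent operators have the same spectrum, this yields $m(T)=m(T^*)$.

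\textbf{Step 1: polar decomposition.} Write $T=W|T|$ with $|T|=(T^*T)^{1/2}$. Here $W$ is the partial isometry with initial space $N(T)^\perp=\ol{R(|T|)}$ and final space $\ol{R(T)}=N(T^*)^\perp$. Thus $W$ maps $N(T)^\perp$ isometrically onto $N(T^*)^\perp$ and vanishes on $N(T)$.

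\textbf{Step 2: extend $W$ to a unitary.} This is the only point where the hypothesis enters. Since $\dim N(T)=\dim N(T^*)$ (as Hilbert space dimensions, possibly infinite), there is a unitary $J:N(T)\to N(T^*)$. Define $V$ on $H$ by $V=W$ on $N(T)^\perp$ and $V=J$ on $N(T)$. Then $V$ maps the orthogonal decomposition $N(T)^\perp\oplus N(T)$ isometrically onto $N(T^*)^\perp\oplus N(T^*)=H$, so $V$ is unitary.

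Because $R(|T|)\subseteq N(T)^\perp$, we have $V|T|=W|T|=T$. Taking adjoints gives $T^*=|T|V^*$. Hence
\[
TT^*=V|T|^2V^*=V(T^*T)V^*.
\]

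\textbf{Step 3: conclude.} From Step 2, $\sigma(TT^*)=\sigma(T^*T)$. Applying Proposition \ref{prop:basic-formulas}(5) to both $T$ and $T^*$ gives $m(T^*)^2=\inf\sigma(TT^*)=\inf\sigma(T^*T)=m(T)^2$, and since both quantities are nonnegative, $m(T)=m(T^*)$.

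The main obstacle is the unitary extension in Step 2. One must check that $V$ is indeed surjective, which holds because $J$ is onto $N(T^*)$ and $W$ is onto $\ol{R(T)}=N(T^*)^\perp$. One must also check that $V|T|=T$ is unaffected by the choice of $J$, which holds because $|T|$ has range inside $N(T)^\perp$. Both points are routine, but they are exactly where the equality of kernel dimensions is used; without it, $T^*T$ and $TT^*$ agree only away from $0$, and $m(T)$ and $m(T^*)$ can differ (e.g.\ for the unilateral shift $S$ on $H^2$, $m(S)=1$ while $m(S^*)=0$).
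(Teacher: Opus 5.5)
Your proof is correct, and it takes a different route from the paper's. The paper argues by cases, without polar decomposition:
\begin{itemize}
\item If $N(T)\neq\{0\}$, both kernels are nontrivial, so $m(T)=m(T^*)=0$.
\item If $N(T)=\{0\}=N(T^*)$ and $R(T)$ is not closed, then $R(T^*)$ is not closed either, by the closed range theorem, so again both minimum moduli vanish.
\item Otherwise $R(T)$ is closed and dense, so $T$ is invertible and $m(T)=1/\|T^{-1}\|=1/\|(T^{-1})^*\|=m(T^*)$.
\end{itemize}
You instead use the hypothesis exactly once. You extend the partial isometry of the polar decomposition to a unitary $V$ with $T=V|T|$, and then read off $TT^*=V(T^*T)V^*$.

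The paper's route is more elementary. It also exposes the dichotomy that, under the hypothesis, either $T$ is invertible or $m(T)=m(T^*)=0$, which is what Proposition \ref{prop:adj-bddbelow-invrt-positiveminmod} then extracts.

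Your route proves something strictly stronger: $T^*T$ and $TT^*$ are unitarily equivalent, not merely equal at the bottom of their spectra. For instance, $T$ attains its minimum modulus exactly when $m(T)^2$ is an eigenvalue of $T^*T$. So your argument also shows that, under the hypothesis, $T$ is minimum attaining if and only if $T^*$ is. That fails without the hypothesis. Take $T=SD$ with $D=\mathrm{diag}(1+1/n)$: $T$ is injective with unattained minimum modulus $1$, while $T^*$ has nontrivial kernel and so attains its minimum.
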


\begin{proof}
We shall proceed by considering two cases: first, when the kernel is non-trivial, and second, when the kernel is trivial.

\textbf{Case 1: $\dim N(T) > 0$.} 
If the kernel is non-trivial, there exists a unit vector $x_0 \in N(T)$ such that $Tx_0 = 0$. By the definition of the minimum modulus,
\[
m(T) = \inf_{\|x\|=1} \|Tx\| \le \|Tx_0\| = 0,
\]
which implies $m(T)=0$. Since $\dim N(T^*) = \dim N(T) > 0$, the adjoint operator $T^*$ also has a non-trivial kernel. Thus, there exists a unit vector $y_0 \in N(T^*)$ such that $T^*y_0 = 0$, implying $m(T^*) = 0$. In this case, $m(T) = m(T^*) = 0$.

\textbf{Case 2: $\dim N(T) = 0$.} 
Then $N(T) = \{0\}$, and the condition $\dim N(T)=\dim N(T^*)$ implies that $N(T^*) = \{0\}$. We distinguish two sub-cases based on whether the range of $T$ is closed:
\begin{enumerate}
    \item If $R(T)$ is not closed, then $T$ is not bounded below, which implies $m(T) = 0$. Since $N(T^*) = \{0\}$, the range $R(T)$ is dense. If $R(T)$ is dense but not closed, then $R(T^*)$ is also not closed, which implies $m(T^*) = 0$.
    \item If $R(T)$ is closed, then since $N(T^*) = \{0\}$, we have $R(T) = N(T^*)^\perp =  H$. Thus, $T$ is invertible, and by the Inverse Mapping Theorem, $T^{-1} \in \clb(H)$. 
\end{enumerate}

In the invertible case, the minimum modulus is given by
\[
m(T) = \frac{1}{\|T^{-1}\|}.
\]
Since $T$ is invertible, its adjoint $T^*$ is also invertible with $(T^*)^{-1} = (T^{-1})^*$. We have
\[
m(T^*) = \frac{1}{\|(T^*)^{-1}\|} = \frac{1}{\|(T^{-1})^*\|} = \frac{1}{\|T^{-1}\|} = m(T).
\]
Thus, $m(T) = m(T^*)$.
\end{proof}

\begin{remark}
   The above theorem partially generalizes \cite[Corollary 3.14]{BalaRamesh2020}.
\end{remark}

The following result follows from an application of Theorem \ref{thm:kernel-dim-equality} and the established properties of complex symmetric operators in \cite[Proposition 1]{GarciaPutinar2006}.
\begin{proposition}\label{prop:adj-bddbelow-invrt-positiveminmod}
    Let $T\in\clb(H)$ be a complex complex symmetric operator. Then the following are true:
    \begin{enumerate}
        \item $m(T)=m(T^*)$;
        \item $T$ is bounded below if and only if $T$ is invertible;
        \item $m(T) > 0$ if and only if $T$ is invertible.
    \end{enumerate}
\end{proposition}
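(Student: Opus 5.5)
The plan is to reduce everything to the single fact that a complex symmetric operator $T=CT^*C$ satisfies $\dim N(T)=\dim N(T^*)$, and then invoke Theorem \ref{thm:kernel-dim-equality}.

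\textbf{The kernel identity.} First check that the conjugation $C$ maps $N(T)$ bijectively onto $N(T^*)$. If $Tx=0$, then $T^*Cx = C(CT^*C)x = CTx = 0$, so $C N(T)\subseteq N(T^*)$. Symmetrically, $T^*=CTC$ gives $C N(T^*)\subseteq N(T)$. Since $C^2=I$, $C$ restricts to a bijection between the two kernels. Because $C$ is an antilinear isometry, it carries orthonormal sets to orthonormal sets; indeed $\langle Cx,Cy\rangle=\langle y,x\rangle$. Hence an orthonormal basis of $N(T)$ is sent to an orthonormal basis of $N(T^*)$, and the Hilbert dimensions agree. This is essentially \cite[Proposition 1]{GarciaPutinar2006}. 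Theorem \ref{thm:kernel-dim-equality} then gives (1).

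\textbf{Item (2).} Invertible operators are bounded below, so only the forward implication needs work. Suppose $T$ is bounded below. Then $N(T)=\{0\}$ and $R(T)$ is closed. By the kernel identity, $N(T^*)=\{0\}$, so $R(T)=N(T^*)^\perp=H$. Thus $T$ is a bijection, and it is invertible by the inverse mapping theorem. Alternatively, (1) gives $m(T^*)=m(T)>0$, so $T^*$ is injective, and the same conclusion follows.

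\textbf{Item (3).} This follows by combining (2) with Proposition \ref{prop:basic-formulas}(1), which says $m(T)>0$ if and only if $T$ is bounded below.

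The only delicate point is the antilinearity of $C$: one must confirm that $C$ preserves orthogonality and hence dimension even when the kernels are infinite dimensional. The identity $\langle Cx,Cy\rangle=\langle y,x\rangle$ settles this, so I expect no real obstacle.
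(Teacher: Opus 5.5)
Your proof is correct and takes essentially the paper's route: the paper derives the proposition from Theorem \ref{thm:kernel-dim-equality} together with the Garcia--Putinar fact that the conjugation $C$ maps $N(T)$ onto $N(T^*)$, which you verify directly, including the dimension count. Items (2) and (3) then follow as you describe, from $R(T)=N(T^*)^\perp=H$ and Proposition \ref{prop:basic-formulas}(1).
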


It is worth emphasizing the contrast between the unilateral shift $S$ on $H^2$ and its compression $S_u$ to the model space $\clk_u$. While $S$ is an isometry and hence satisfies $m(S)=1$, its adjoint $S^*$ has nontrivial kernel, so $m(S^*)=0$. Thus the minimum modulus is not preserved under taking adjoints in general. In sharp contrast, the compressed shift $S_u$ exhibits a fundamentally different behavior: both $S_u$ and $S_u^*$ are contractions with rank one defect operators, and their minimum moduli are governed by the scalar quantity $|u(0)|$. The following theorem shows that, unlike the case of the unilateral shift, the operators $S_u$ and $S_u^*$ have the same strictly positive minimum modulus whenever $u(0)\neq0$, and moreover both operators attain their minimum modulus.

\begin{theorem}\label{thm:minmod-compressed-shift}
Let $u$ be a nonconstant inner function. Then $m(S_u)=|u(0)|=m(S_u^*).$
Moreover, both $S_u$ and $S_u^*$ attain their minimum modulus.
\end{theorem}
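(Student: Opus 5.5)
Our plan is to compute $\sigma(S_u^*S_u)$ from the defect identity \eqref{eq:defect-identities} and Lemma \ref{lem:spectrum-perturbation}, and then read off $m(S_u)$ via Proposition \ref{prop:basic-formulas}(5). From \eqref{eq:defect-identities} we have
\[
S_u^*S_u = I_{\clk_u} - S^*u\otimes S^*u .
\]
We first compute $\|S^*u\|^2$. Since $u=u(0)+zS^*u$ and $\|u\|_2=1$, we get $1=|u(0)|^2+\|S^*u\|^2$, so $\|S^*u\|^2=1-|u(0)|^2$. Because $u$ is nonconstant, $S^*u\neq0$, and Lemma \ref{lem:spectrum-perturbation} with $\alpha=1$, $\beta=-1$ applies.

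\textbf{Main obstacle.} The lemma is stated in a form that presupposes the underlying space has dimension at least two, so that both eigenvalues actually occur. When $\dim\clk_u\ge 2$, i.e.\ $u$ is not a single Blaschke factor, we obtain
\[
\sigma(S_u^*S_u)=\{1,\ |u(0)|^2\}, \qquad\text{hence}\qquad m(S_u)^2=\min\{1,|u(0)|^2\}=|u(0)|^2 .
\]
When $\dim\clk_u=1$, i.e.\ $u(z)=c\,\frac{z-a}{1-\bar a z}$, the eigenvalue $1$ is absent, but the spectrum is still $\{|u(0)|^2\}$. In either case $m(S_u)=|u(0)|$.

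\textbf{Attainment for $S_u$.} We take $x_0=S^*u/\|S^*u\|\in\clk_u$. Then
\[
\|S_ux_0\|^2=\langle S_u^*S_ux_0,x_0\rangle = 1-\|S^*u\|^2=|u(0)|^2 ,
\]
so $S_u$ attains its minimum at $x_0$.

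\textbf{The adjoint $S_u^*$.} We use the second identity in \eqref{eq:defect-identities}, $S_uS_u^*=I-k_0^u\otimes k_0^u$, together with $\|k_0^u\|^2=1-|u(0)|^2$ from \eqref{eq:kernel-lambda-zero}. The same argument gives $m(S_u^*)=|u(0)|$, attained at $y_0=k_0^u/\|k_0^u\|$; here $k_0^u\neq0$ because $|u(0)|<1$. Alternatively, $S_u$ is complex symmetric with respect to the conjugation $f\mapsto u\bar z\bar f$ on $\clk_u$, so Proposition \ref{prop:adj-bddbelow-invrt-positiveminmod} gives $m(S_u^*)=m(S_u)$ directly. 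The direct computation is preferable, however, since it also produces the attaining vector.
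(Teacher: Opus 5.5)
Your proposal is correct and follows essentially the same route as the paper: you combine the defect identities \eqref{eq:defect-identities} with Lemma~\ref{lem:spectrum-perturbation} to get $\sigma(S_u^*S_u)$ and $\sigma(S_uS_u^*)$, and you use $S^*u$ and $k_0^u$ as the minimizing vectors. Your variations are all sound: you compute $\|S^*u\|^2$ from $u=u(0)+zS^*u$ rather than through the conjugation $C_u$, you verify attainment by direct evaluation rather than citing the eigenvalue criterion, and you treat separately the case $\dim\clk_u=1$, where the eigenvalue $1$ is absent (a point the paper's use of the lemma glosses over).
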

\begin{proof}
By the defect identities \eqref{eq:defect-identities}, we have both $S_u^*S_u$ and $S_uS_u^*$ are rank--one perturbations of the identity on $\clk_u$.
Applying Lemma~\ref{lem:spectrum-perturbation} with $\alpha=1,\,\beta=-1$ gives
\[
\sigma(S_u^*S_u)=\{\,1,\ 1-\|S^*u\|^2\,\},
\qquad
\sigma(S_uS_u^*)=\{\,1,\ 1-\|k_0^u\|^2\,\}.
\]
Note that $S^*u=C_uk_0^u$, where $C_u$ is given by \eqref{conjugation}.  Thus using \eqref{eq:kernel-lambda-zero} and the fact that
\[
\|S^*u\|^2=\|C_u k_0^u\|^2=\|k_0^u\|^2,
\]
we obtain
\[
\sigma(S_u^*S_u)=\{1,\,|u(0)|^2\},
\qquad
\sigma(S_uS_u^*)=\{1,\,|u(0)|^2\}.
\]
Now $u$ being nonconstant, we have $|u(0)|<1$. Consequently,
\[
\inf\sigma(S_u^*S_u)=|u(0)|^2,
\qquad
\inf\sigma(S_uS_u^*)=|u(0)|^2.
\]
Since $m(S_u)^2=\inf\sigma(S_u^*S_u)$ and $m(S_u^*)^2=\inf\sigma(S_uS_u^*)$, we conclude that
\[
m(S_u)=|u(0)|=m(S_u^*).
\]
Finally, by \eqref{eq:defect-identities},
\[
S_u^*S_u(S^*u)=(1-\|S^*u\|^2)\,S^*u=|u(0)|^2\,S^*u,
\]
so $|u(0)|^2$ is an eigenvalue of $S_u^*S_u$. Hence by \cite[Proposition 3.1]{GaneshRameshSukumar2015} and \cite[Proposition 3.2]{GaneshRameshSukumar2018}, $S_u$ attains its minimum modulus.
Similarly,
\[
S_uS_u^*(k_0^u)=(1-\|k_0^u\|^2)\,k_0^u=|u(0)|^2\,k_0^u,
\]
so $|u(0)|^2$ is an eigenvalue of $S_uS_u^*$ and therefore, $S_u^*$ attains its minimum modulus.
\end{proof}

\subsection{Minimum modulus of dual compressed shifts}

Let $u$ be a nonconstant inner function. Recall that the dual truncated Toeplitz
operator (DTTO) with symbol $\vp\in L^\infty$ is
\[
D_\vp=(I-P_{\clk_u})\restr{M_\vp}{\clk_u^\perp},
\]
acting on $\clk_u^\perp=uH^2\oplus H^2_{-}$.
In this subsection we consider the distinguished symbol $\vp(z)=z$.

\begin{lemma}\label{lem:Q-ker}
Let $Q:H^2_{-}\to H^2_{-}$ be defined by
\[
Qg=P_{-}(zg),\qquad g\in H^2_{-}.
\]
Then $Q$ is a partial isometry with
$N(Q) = \mathrm{span}\{\bar z\}.$
Consequently, the \emph{reduced} minimum modulus of $Q$ equals $1$.
\end{lemma}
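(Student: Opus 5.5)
We will compute $Q$ explicitly on the standard orthonormal basis of $H^2_{-}=\overline{zH^2}$, namely $\{\bar z^{n}: n\ge 1\}$. For $n\ge 2$ we have $z\bar z^{n}=\bar z^{n-1}\in H^2_-$, so $Q\bar z^{n}=\bar z^{n-1}$. For $n=1$ we get $z\bar z=1\in H^2$, so $Q\bar z=P_-(1)=0$. Extending by linearity and continuity, $Q\big(\sum_{n\ge1}a_n\bar z^{n}\big)=\sum_{n\ge2}a_n\bar z^{n-1}$. In other words, $Q$ is unitarily equivalent to the backward shift $S^*$ on $H^2$, via the unitary $\bar z^{n}\mapsto z^{n-1}$. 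This is also consistent with the anti-unitary relation $VT_{\vp}=S_{\ol\vp}V$ recorded above, applied with $\vp=\bar z$.

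From this formula both structural claims follow at once. First, $\|Qg\|^2=\sum_{n\ge2}|a_n|^2$, so $Qg=0$ exactly when $a_n=0$ for all $n\ge2$; hence $N(Q)=\mathrm{span}\{\bar z\}$. Second, for $g\in N(Q)^\perp=\ol{\mathrm{span}}\{\bar z^{n}:n\ge2\}$ we have $\|Qg\|=\|g\|$, since $Q$ maps the orthonormal set $\{\bar z^{n}\}_{n\ge2}$ bijectively onto the orthonormal basis $\{\bar z^{n}\}_{n\ge1}$. Thus $Q$ is isometric on $N(Q)^\perp$, which is precisely the definition of a partial isometry. Equivalently, one may check directly that $Q^*Q=I-\bar z\otimes\bar z$ is the orthogonal projection onto $N(Q)^\perp$.

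Finally, the reduced minimum modulus is
\[
\gamma(Q)=\inf\{\|Qx\|: x\in N(Q)^\perp,\ \|x\|=1\},
\]
and this infimum equals $1$ because $Q$ is isometric on $N(Q)^\perp$ and that subspace is nonzero. Alternatively, $\restr{|Q|}{N(Q)^\perp}$ is the identity operator, whose spectrum is $\{1\}$. There is no real obstacle in this argument. The only point requiring care is the boundary case $n=1$, where one must note that $P_-$ annihilates the constant function $1$ because constants belong to $H^2$.
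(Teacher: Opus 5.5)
Your proof is correct and follows essentially the same route as the paper. You expand $g=\sum_{n\ge1}a_n\bar z^{n}$, observe that $Qg=\sum_{n\ge2}a_n\bar z^{n-1}$, and read off both $N(Q)=\mathrm{span}\{\bar z\}$ and the isometry of $Q$ on $N(Q)^\perp$, whence $\gamma(Q)=1$; your extra remarks (unitary equivalence with $S^*$ and $Q^*Q=I-\bar z\otimes\bar z$) are correct but not needed.
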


\begin{proof}
Since $H^2_-=\ol{zH^2}$, every $g\in H^2_-$ admits a Fourier expansion
\[
g=\sum_{n\geq  1} a_n \bar{z}^{n}
\quad\text{with}\qquad
\|g\|^2=\sum_{n\geq  1}|a_n|^2.
\]
Multiplying by $z$ gives
\[
zg=\sum_{n\geq  1} a_n \bar{z}^{(n-1)} = a_1 + \sum_{n\geq  2} a_n \bar{z}^{(n-1)}.
\]
Since $P_-$ is the orthogonal projection onto $H^2_-$, we get
\[
Qg=P_{-}(zg)=\sum_{n\geq  2} a_n \bar{z}^{(n-1)}.
\]
It follows that $Qg=0$ if and only if $a_n=0$ for all $n\geq  2$, that is,
$g=a_1 \bar{z}$. Hence $N(Q)=\mathrm{span}\{\bar{z}\}$.

Now suppose $g\perp \bar{z}$. Then $a_1=0$, and so
\[
\|Qg\|^2=\sum_{n\geq  2}|a_n|^2=\sum_{n\geq  1}|a_n|^2=\|g\|^2.
\]
Thus $Q$ is an isometry on $N(Q)^\perp$. In particular, $Q$ is a partial
isometry (an operator that is an isometry on $N(Q)^\perp$).

Since $\|Qg\|=\|g\|$ for all $g\in N(Q)^\perp$, we get $\gamma(Q)=1$.
\end{proof}

\begin{proposition}\label{prop:kernel-Au}
Let $\widetilde{D_u}
=
\begin{pmatrix}
S & H_{u\bar z}^{*} \\[4pt]
0 & Q
\end{pmatrix}
\quad \text{acting on } H^{2}\oplus H^{2}_{-},$
where $S$ is the unilateral shift on $H^{2}$ and $Qg=P_{-}(zg)$ on $H^{2}_{-}$.
Then
\[
N(\widetilde{D_u})=
\begin{cases}
\{0\}\oplus \operatorname{span}\{\bar z\}, & \text{if } u(0)=0,\\[4pt]
\{0\}, & \text{if } u(0)\neq 0.
\end{cases}
\]
\end{proposition}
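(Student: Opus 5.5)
We compute the kernel directly from the block form. Write a vector as $f\oplus g$ with $f\in H^2$ and $g\in H^2_-$. Then
\[
\widetilde{D_u}(f\oplus g)=\bigl(Sf+H_{u\bar z}^*g\bigr)\oplus Qg .
\]
The second component gives $Qg=0$. By Lemma~\ref{lem:Q-ker}, this forces $g=c\bar z$ for some $c\in\C$. The first equation then reads $Sf=-c\,H_{u\bar z}^*\bar z$. So the whole argument reduces to computing $H_{u\bar z}^*\bar z\in H^2$ explicitly.

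For $h\in H^2$ we have
\[
\langle h, H_{u\bar z}^*\bar z\rangle=\langle P_-(u\bar z h),\bar z\rangle=\langle u\bar z h,\bar z\rangle=\langle uh,1\rangle=u(0)h(0)=\langle h,\ol{u(0)}\rangle .
\]
Hence $H_{u\bar z}^*\bar z=\ol{u(0)}\cdot 1$, the constant function. The kernel equation therefore becomes
\[
zf=-c\,\ol{u(0)} .
\]

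Now split into the two cases of the statement. The left side $zf$ vanishes at $0$; evaluating at $0$, or equivalently comparing the zeroth Fourier coefficient, gives $c\,\ol{u(0)}=0$ and hence $zf=0$. Since $S$ is injective, this gives $f=0$.

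If $u(0)\neq0$, then $c=0$, so $g=0$ as well and the kernel is $\{0\}$.

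If $u(0)=0$, then $c$ is unrestricted. Every vector $0\oplus c\bar z$ lies in the kernel, and we have shown the kernel is contained in this set, so $N(\widetilde{D_u})=\{0\}\oplus\operatorname{span}\{\bar z\}$.

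The only step needing care is the adjoint computation $H_{u\bar z}^*\bar z=\ol{u(0)}$. In particular, one must use that $P_-$ is self-adjoint and that $\bar z\in H^2_-$, which lets the projection be dropped inside the inner product. Everything else follows from Lemma~\ref{lem:Q-ker} and the injectivity of $S$.
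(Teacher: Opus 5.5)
Your proof is correct and follows the same route as the paper: you use Lemma~\ref{lem:Q-ker} to force $g=c\bar z$, then read off the first component and use that $Sf$ has zero constant term. The only difference is that you verify $H_{u\bar z}^*\bar z=\ol{u(0)}$ directly from the adjoint pairing, whereas the paper cites the rank-one formula for $H_{u\bar z}^*$ from C\^amara--Ross; this makes your argument self-contained and lets you treat both cases at once.
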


\begin{proof}
Let $f\oplus g\in H^{2}\oplus H^{2}_{-}$. Then $f\oplus g\in N(\widetilde{D_u})$ if and only if
\[
Sf+H_{u\bar z}^{*}g=0
\quad\text{and}\quad
Qg=0.
\]
From Lemma \ref{lem:Q-ker}
\[
N(Q)=\operatorname{span}\{\bar z\}.
\]
Now we consider two cases which exhaust all possibilities.

\textbf{Case 1: $u(0)=0$.}
In this case $H_{u\bar z}^{*}=0$, so the kernel equations reduce to
\[
Sf=0
\quad\text{and}\quad
Qg=0.
\]
Since $S$ is injective on $H^{2}$, we obtain $f=0$, while $g\in N(Q)
=\operatorname{span}\{\bar z\}$. Therefore
\[
N(\widetilde{D_u})=\{0\}\oplus \operatorname{span}\{\bar z\}.
\]

\textbf{Case 2: $u(0)\neq 0$.}
Then by \cite[Theorem 4.2]{CamaraRoss2021}, we have
\[
H_{u\bar z}^{*}
=
\ol{u(0)}(1\otimes \bar z).
\]
From $Qg=0$ we have $g=c\bar z$ for some $c\in\C$. Then
\[
H_{u\bar z}^{*}g
=
\ol{u(0)}\,\langle c\bar z,\bar z\rangle\,1
=
\ol{u(0)}\,c.
\]
The first kernel equation becomes
\[
Sf+\ol{u(0)}\,c=0.
\]
Since $Sf\in zH^{2}$ has zero constant term, this equality forces $c=0$.
Hence $g=0$, and then $Sf=0$ implies $f=0$. Thus $N(\widetilde{D_u})=\{0\}$.

The conclusion follows.
\end{proof}

\begin{theorem}\label{thm:minmod-S-H-Q}
Let $u$ be a nonconstant inner function. 
Then
\[
m( D_u)=
\begin{cases}
0, & u(0)=0,\\ 
|u(0)|, & u(0)\neq 0,
\end{cases}
\]
and in both cases $ D_u$ attains its minimum modulus.
\end{theorem}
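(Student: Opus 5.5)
The plan is to use the defect identity \eqref{eq:defect-Du-star}, which gives
\[
D_u^*D_u = I-(1-|u(0)|^2)\,\bar z\otimes\bar z
\]
on $\clk_u^\perp$. Note that $\bar z\in H^2_-\subset\clk_u^\perp$ and $\|\bar z\|=1$. Applying Lemma~\ref{lem:spectrum-perturbation} with $\alpha=1$, $\beta=-(1-|u(0)|^2)$ and $x=\bar z$ gives
\[
\sigma(D_u^*D_u)=\{1,\ |u(0)|^2\}.
\]
The case $u(0)=0$ needs a word, since then $\alpha+\beta\|x\|^2=0$; this is harmless because the lemma only needs $\beta\neq0$, and here $\beta=-1$. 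Since $u$ is nonconstant, $|u(0)|<1$, so $\inf\sigma(D_u^*D_u)=|u(0)|^2$. Proposition~\ref{prop:basic-formulas}(5) then yields $m(D_u)=|u(0)|$, and this specializes to $0$ when $u(0)=0$.

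For attainment, I would compute directly from the defect identity:
\[
D_u^*D_u\bar z=\bar z-(1-|u(0)|^2)\bar z=|u(0)|^2\bar z,
\]
so $\|D_u\bar z\|^2=\langle D_u^*D_u\bar z,\bar z\rangle=|u(0)|^2$. Hence the unit vector $\bar z$ realizes the infimum, and $D_u$ attains its minimum modulus in both cases. Alternatively, one can cite \cite[Proposition 3.1]{GaneshRameshSukumar2015} as in Theorem~\ref{thm:minmod-compressed-shift}.

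As a consistency check in the case $u(0)=0$, Proposition~\ref{prop:kernel-Au} together with the unitary $U$ from \eqref{eq:unitary-U} gives $N(D_u)=U(\{0\}\oplus\mathrm{span}\{\bar z\})=\mathrm{span}\{\bar z\}\neq\{0\}$. So $m(D_u)=0$ is attained by Proposition~\ref{prop:basic-formulas}(4), independently of the spectral computation. When $u(0)\neq0$, the same proposition shows $D_u$ is injective, which agrees with the positive value $|u(0)|$.

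The main obstacle is justifying the defect identity \eqref{eq:defect-Du-star}, which is quoted from \cite{Camara2020}. If I had to verify it, I would use the block form \eqref{matrix:DCS}. The first column gives $S^*S=I$. The off-diagonal entry is $S^*H_{u\bar z}^*=(H_{u\bar z}S)^*$. The bottom-right entry is $H_{u\bar z}H_{u\bar z}^*+Q^*Q$, where $Q^*Q=I-\bar z\otimes\bar z$ by Lemma~\ref{lem:Q-ker}. Using $H_{u\bar z}^*=\ol{u(0)}(1\otimes\bar z)$, we get $H_{u\bar z}S=0$ because $H_{u\bar z}$ has rank one with range spanned by the constant term, and $H_{u\bar z}H_{u\bar z}^*=|u(0)|^2\,\bar z\otimes\bar z$. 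Assembling these entries reproduces the identity. Everything else in the argument is routine.
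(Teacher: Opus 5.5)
Your argument is correct. It is not the route of the paper's written proof, but it is exactly the ``alternative and shorter proof'' the authors mention in the remark after the theorem.

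\textbf{The paper's route.} The paper works directly with the block matrix $\widetilde{D_u}$. For $u(0)=0$ it uses the nontrivial kernel (Proposition~\ref{prop:kernel-Au}). For $u(0)\neq0$ it writes $g=c\bar z+h$ with $h\perp\bar z$ and computes
$\|\widetilde{D_u}(f\oplus g)\|^2=\|f\|^2+|u(0)|^2|c|^2+\|h\|^2$.
Both the lower bound $m\ge|u(0)|$ and the minimizer $0\oplus\bar z$ are then read off from the definition of $m$ alone.

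\textbf{Comparison.} Your block-form verification of \eqref{eq:defect-Du-star}, namely $\widetilde{D_u}^*\widetilde{D_u}=I\oplus\bigl(I-(1-|u(0)|^2)\,\bar z\otimes\bar z\bigr)$, is that same quadratic-form identity written in operator language. So the two proofs have the same mathematical content. Your version gains two things: it treats $u(0)=0$ and $u(0)\neq0$ uniformly, and it identifies the minimizer $\bar z$ as the eigenvector of $D_u^*D_u$ for its smallest eigenvalue. The cost is that it relies on Lemma~\ref{lem:spectrum-perturbation} and Proposition~\ref{prop:basic-formulas}(5), whereas the paper's computation uses nothing beyond the definition.

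\textbf{A wording fix.} The conclusion $H_{u\bar z}S=0$ is right, but the stated reason is garbled. It holds because $H_{u\bar z}f=u(0)\langle f,1\rangle\,\bar z$ depends only on $f(0)$; equivalently, $R(H_{u\bar z}^*)=\C\,1\subseteq N(S^*)$. It is $H_{u\bar z}^*$, not $H_{u\bar z}$, whose range is spanned by the constants; the range of $H_{u\bar z}$ is spanned by $\bar z$.
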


\begin{proof}
By Lemma~\ref{lem:Q-ker} we have $N(Q)=\C\bar z$. Moreover, $Q$ is an isometry
on $N(Q)^\perp$. By \cite[Theorem~4.2]{CamaraRoss2021}, the adjoint Hankel operator $H_{u\bar z}^*$
satisfies
\[
H_{u\bar z}^*\equiv 0 \text{ if }u(0)=0,
\qquad
H_{u\bar z}^*=\ol{u(0)}(1\otimes \bar z) \text{ if }u(0)\neq 0.
\]
In the latter case $H_{u\bar z}^*$ is rank one and
\[
H_{u\bar z}^*g=\ol{u(0)}\,\langle g,\bar z\rangle\,1,
\qquad g\in H^2_{-}.
\]

\textbf{Case 1: $u(0)=0$.}
Then $H_{u\bar z}^*\equiv 0$, so $ \widetilde{D_u}=S\oplus Q$. By Proposition~\ref{prop:kernel-Au}, $N(\widetilde{D_u})\neq \{0\}$, and therefore
$m(\widetilde{D_u})=0$. In fact, for $x=0\oplus\bar z$, we have $\|x\|=1$ and
\[
\widetilde{D_u} x=0\oplus Q\bar z=0.
\]

\smallskip 
\textbf{Case 2: $u(0)\neq 0$.}
Set 
\[\alpha:=u(0) \text{ and } B:=H_{u\bar z}^*=\ol{\alpha}(1\otimes \bar z).\]
Let $e:=\bar z\in H^2_{-}$. Then $\|e\|=1$ and $e\in N(Q)$. Moreover,
$Be=\ol{\alpha}$.

For $f\in H^2$, we have
\[
\widetilde{D_u}(f\oplus e)=(Sf+Be)\oplus Qe=(Sf+\ol{\alpha})\oplus 0.
\]
Since $R(S)=zH^2$ and $1\perp zH^2$, we get
\[
\|Sf+\ol{\alpha}\|^2=\|Sf\|^2+|\alpha|^2=\|f\|^2+|\alpha|^2.
\]
Therefore, 
\[
\frac{\|\widetilde{D_u}(f\oplus e)\|^2}{\|(f\oplus e)\|^2}
=
\frac{\|f\|^2+|\alpha|^2}{\|f\|^2+1}.
\]
Since $0<|\alpha|=|u(0)|< 1$, we have
\[
\frac{\|f\|^2+|\alpha|^2}{\|f\|^2+1}-|\alpha|^2
=
\frac{\|f\|^2+|\alpha|^2-|\alpha|^2(\|f\|^2+1)}{\|f\|^2+1}
=
\frac{(1-|\alpha|^2)\|f\|^2}{\|f\|^2+1}\geq 0.
\]
Hence $\frac{\|f\|^2+|\alpha|^2}{\|f\|^2+1}\geq |\alpha|^2$, and consequently, for $f\oplus e\neq 0$, we have
\[
\frac{\|\widetilde{D_u} (f\oplus e)\|}{\|f\oplus e\|}\geq |\alpha|,\qquad\text{for all}\,f\in H^2.
\]
In particular, taking $f=0$ gives $\|0\oplus e\|=1$ and $\|\widetilde{D_u}(0\oplus e)\|=|\alpha|$. Thus
\begin{equation}\label{eq:upperbound}
m(\widetilde{D_u})\leq  \|\widetilde{D_u}(0\oplus e)\|=|\alpha|.
\end{equation}

Now let $f\oplus g\in H^2\oplus H^2_{-}$ with $f\oplus g\neq0$.
Decompose $H^2_-=N(Q)\oplus N(Q)^\perp$ and write $g$ as
\[
g=ce+h,\text{ where } c\in\C,\,h\in N(Q)^\perp.
\]
Then $Qe=0$ and $\|Qh\|=\|h\|$, while $Bh=\ol{\alpha}\langle h,e\rangle\,1=0$ and $B(ce)=\ol{\alpha}c$. Thus we get
\[
\widetilde{D_u}(f\oplus g)=(Sf+\ol{\alpha}c)\oplus Qh.
\]
Therefore, using $Sf\perp 1$,
\[
\|\widetilde{D_u}(f\oplus g)\|^2
=\|f\|^2+|\alpha|^2|c|^2+\|h\|^2,
\qquad
\|(f\oplus g)\|^2=\|f\|^2+|c|^2+\|h\|^2.
\]
Letting $A:=\|f\|^2+\|h\|^2 > 0$, we obtain
\[
\frac{\|\widetilde{D_u}(f\oplus g)\|^2}{\|f\oplus g\|^2}
=
\frac{A+|\alpha|^2|c|^2}{A+|c|^2}
=
|\alpha|^2+\frac{A(1-|\alpha|^2)}{A+|c|^2}
\geq  |\alpha|^2,
\]
since $|\alpha|=|u(0)|<  1$. Hence 
\begin{equation}\label{eq:lowerbound}
    m(\widetilde{D_u})\geq |\alpha|.
\end{equation}
Combining (\ref{eq:upperbound}) and (\ref{eq:lowerbound}) gives $m(\widetilde{D_u})=|\alpha|=|u(0)|$.

Finally, the unit vector $0\oplus e$ satisfies
\[
\|\widetilde{D_u}(0\oplus e)\|=\|\ol{\alpha}\oplus 0\|=|\alpha|=m(\widetilde{D_u}).
\]
Thus in each case $\widetilde{D_u}$ attains its minimum modulus. Since minimum attainment is preserved by unitary equivalence, by Proposition~\ref{prop:basic-formulas} and \eqref{matrix:DCS}, $D_u$ is minimum attaining. 
\end{proof}
\begin{remark}
The above proof relies entirely on the definition of the minimum modulus. An alternative and shorter proof can be obtained by combining the defect identity \eqref{eq:defect-Du-star} with the Lemma \ref{lem:spectrum-perturbation}.

\end{remark}

\section{Minimum modulus of normal DTTOs}

For a nonconstant inner function $u$, and $\vp\in L^\infty$, the dual truncated Toeplitz operator $D_\vp$ on $\clk_u^\perp$ satisfies $\|D_\vp\|=\|\vp\|_\infty$. The minimum modulus is given by
\[
m(D_\vp)
:=
\inf\left\{\|D_\vp x\|:\ x\in \clk_u^\perp,\ \|x\|=1\right\}.
\]
Recall that $m(D_\vp)>0$ if and only if $D_\vp$ is bounded below.
Since $D_\vp$ is complex symmetric with respect to the conjugation $C_u$, the operator $D_\vp$ is bounded below if and only if $D_\vp$ is invertible (cf. Proposition \ref{prop:adj-bddbelow-invrt-positiveminmod}).

\begin{proposition}\label{prop:min-modulus-inequality}
Let $u$ be a nonconstant inner function, and let $D_\vp$ be a normal dual truncated Toeplitz operator on $\clk_u^\perp$ with symbol $\vp \in L^\infty$. Then the minimum modulus $m(D_\vp)$ satisfies the following inequalities:
\begin{equation}
\dist\bigl(0, \operatorname{convex}(\essran(\vp))\bigr) \le m(D_\vp) \le \essinf_{z \in \T} |\vp(z)|.
\end{equation}
\end{proposition}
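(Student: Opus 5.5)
Since $D_\vp$ is normal, Proposition~\ref{prop:basic-formulas} gives $m(D_\vp)=\dist(0,\sigma(D_\vp))$. Both inequalities therefore reduce to locating $\sigma(D_\vp)$ relative to $\essran(\vp)$. The plan is to show
\[
\essran(\vp)\subseteq \sigma(D_\vp)\subseteq \ol{\operatorname{convex}}\bigl(\essran(\vp)\bigr).
\]
Taking distances from $0$ then gives the two bounds: the first inclusion gives $\dist(0,\sigma(D_\vp))\le \dist(0,\essran(\vp))=\essinf|\vp|$, and the second gives $\dist(0,\sigma(D_\vp))\ge \dist(0,\operatorname{convex}(\essran(\vp)))$. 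Closing the convex hull does not change its distance to $0$.

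\textbf{Upper bound.} Let $\lambda\in\essran(\vp)$. I will show $\lambda\in\sigma_{\mathrm{ap}}(D_\vp)$ by building approximate eigenvectors in $H^2_-$. For $\varepsilon>0$ the set $E=\{|\vp-\lambda|<\varepsilon\}$ has positive measure. Put $f=\chi_E/\sqrt{\mu(E)}\in L^2$, so $\|(\vp-\lambda)f\|\le\varepsilon$. Then set $g_n=\bar z^{\,n}f$. Since $\bar z^{\,n}f$ has its Fourier mass pushed far into negative frequencies, $\|P_-g_n\|\to 1$ and $P g_n\to 0$. Take $x_n=P_-g_n/\|P_-g_n\|\in H^2_-\subseteq\clk_u^\perp$. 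Now
\[
(D_\vp-\lambda)x_n = P_{\clk_u^\perp}\bigl((\vp-\lambda)x_n\bigr),
\]
and $x_n$ differs from $g_n/\|P_-g_n\|$ by a vector tending to $0$ in norm. Hence $\limsup_n\|(D_\vp-\lambda)x_n\|\le\|(\vp-\lambda)f\|\le\varepsilon$. A diagonal argument over $\varepsilon$ gives $m(D_\vp-\lambda)=0$, so $\lambda\in\sigma(D_\vp)$. In particular $m(D_\vp)\le|\lambda|$ for every $\lambda\in\essran(\vp)$, which yields $m(D_\vp)\le\essinf|\vp|$. This step does not use normality.

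\textbf{Lower bound.} For normal $D_\vp$ the spectrum is contained in the closure of the numerical range $W(D_\vp)$. For a unit vector $x\in\clk_u^\perp$,
\[
\langle D_\vp x,x\rangle=\langle P_{\clk_u^\perp}(\vp x),x\rangle=\int_\T \vp\,|x|^2\,d\mu ,
\]
because $x\in\clk_u^\perp$. The measure $|x|^2d\mu$ is a probability measure, so this integral lies in $\ol{\operatorname{convex}}(\essran(\vp))$. One can justify this by noting that any closed half-plane containing $\essran(\vp)$ contains $\vp$ a.e., and hence contains the average. Thus $\sigma(D_\vp)\subseteq\ol{W(D_\vp)}\subseteq\ol{\operatorname{convex}}(\essran(\vp))$, which gives the lower bound.

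The main obstacle is the approximate-eigenvector construction in the upper bound. One must check that $P g_n\to0$ in norm, which holds because $\|Pg_n\|^2=\sum_{k\ge n}|\hat f(k)|^2$ is the tail of a convergent series. One must also check that the projection $P_{\clk_u^\perp}$ does not spoil the estimate; it does not, since it is a contraction. The lower bound is standard once the numerical range identity above is in hand.
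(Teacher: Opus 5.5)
Your proof is correct, and its top-level structure matches the paper's. Both use $m(D_\vp)=\dist(0,\sigma(D_\vp))$ for normal operators and then place $\sigma(D_\vp)$ between $\essran(\vp)$ and its convex hull. The paper does not prove these two inclusions; it cites them from Gu (Corollary 4.3 and Theorem 4.5 of that paper). You prove both directly:
\begin{enumerate}
\item the first with approximate eigenvectors $P_-(\bar z^{\,n}f)/\|P_-(\bar z^{\,n}f)\|$ lying in $H^2_-\subseteq\clk_u^\perp$;
\item the second with the identity $\langle D_\vp x,x\rangle=\int_\T\vp\,|x|^2\,d\mu$, in the style of Hartman--Wintner and Brown--Halmos.
\end{enumerate}

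The paper's route is two lines given the literature. Yours is self-contained and proves slightly more, namely $\essran(\vp)\subseteq\sigma_{\mathrm{ap}}(D_\vp)$.

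Your ingredients also show that normality is unnecessary. For any bounded $T$ one has $m(T)\le|\lambda|$ for every $\lambda\in\sigma_{\mathrm{ap}}(T)$. For a unit vector $x\in\clk_u^\perp$, Cauchy--Schwarz gives $\|D_\vp x\|\ge|\langle D_\vp x,x\rangle|\ge\dist\bigl(0,\operatorname{convex}(\essran(\vp))\bigr)$. Hence both inequalities hold for every DTTO, normal or not. Relatedly, the inclusion $\sigma(T)\subseteq\ol{W(T)}$ that you attribute to normality holds for every bounded operator.
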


\begin{proof}
Since $D_\vp$ is a normal, its minimum modulus is equal to the distance from the origin to its spectrum:
\begin{equation}\label{eq:dist-spec}
m(D_\vp) = \dist(0, \sigma(D_\vp)).
\end{equation}

We utilize the spectral inclusions for dual truncated Toeplitz operators. First, by \cite[Corollary 4.3]{Gu2021}, we have
\[
\essran(\vp) \subseteq \sigma(D_\vp).
\]
Second, by \cite[Theorem 4.5]{Gu2021}, the spectrum is contained in the convex hull of the essential range
\[
\sigma(D_\vp) \subset \operatorname{convex}(\essran(\vp)).
\]

Since $\essran(\vp) \subseteq \sigma(D_\vp)$, the distance from the origin to the spectrum cannot exceed the distance from the origin to the subset $\essran(\vp)$. Thus
\[
m(D_\vp) \le \dist(0, \essran(\vp)) = \essinf_{z \in \T} |\vp(z)|.
\]

Since $\sigma(D_\vp) \subset \operatorname{convex}(\essran(\vp))$, the distance from the origin to the spectrum must be at least the distance from the origin to the containing set $\operatorname{convex}(\essran(\vp))$. Thus
\[
m(D_\vp) \geq \dist\bigl(0, \operatorname{convex}(\essran(\vp))\bigr).
\]

Combining these inequalities completes the proof.
\end{proof}

\begin{remark}
   For $\vp\in L^\infty$, if $D_\vp$ is self-adjoint, then $\vp$ is a real-valued function. Consequently, $\operatorname{convex}(\essran(\vp))=[\essinf\ \vp, \esssup\ \vp]$, and so $\dist(0,\operatorname{convex}(\essran(\vp)))=\essinf_{z\in \T} |\vp(z)|$. In this case, by Proposition \ref{prop:min-modulus-inequality}, we obtain 
   \[m(D_\vp)=\essinf_{z\in \T} |\vp(z)|.\]
\end{remark}
\begin{example}
We apply Proposition \ref{prop:min-modulus-inequality} to the symbol
\[
\vp(z) = \psi(z) + \beta,
\]
where $\beta \in \C$ and $\psi$ is defined as
\[
\psi(e^{it}) =
\begin{cases}
1 & \text{if } 0 \le t < \pi, \\
-1 & \text{if } \pi \le t < 2\pi.
\end{cases}
\]

The essential range of $\vp$ consists of exactly two points
\[
\essran(\vp) = \{ \beta - 1, \beta + 1 \}.
\]
Consequently, the essential infimum is the minimum of the moduli of these two points
\[
\essinf_{z \in \T} |\vp(z)| = \min \{ |\beta - 1|, |\beta + 1| \}.
\]

The convex hull of these two points is the line segment connecting them, that is,
\[
\operatorname{convex}(\essran(\vp)) = [\beta - 1, \beta + 1].
\]

According to Proposition \ref{prop:min-modulus-inequality}, the minimum modulus is bounded as follows
\[
\dist\bigl(0, [\beta - 1, \beta + 1]\bigr) \le m(D_\vp) \le \min \{ |\beta - 1|, |\beta + 1| \}.
\]

\textbf{Specific Case:} Let $\beta = 3i$.
The upper bound is
\[
\essinf |\vp| = \min \{ |3i - 1|, |3i + 1| \} = \sqrt{10}.
\]
For the lower bound, the segment connects $-1+3i$ and $1+3i$. The point closest to the origin is the midpoint $3i$. Thus
\[
\dist\bigl(0, \operatorname{convex}(\essran(\vp))\bigr) = |3i| = 3.
\]
The inequality yields $3 \le m(D_\vp) \le \sqrt{10}$. 
\end{example}

\begin{proposition}\label{cor:min-modulus-convex}
Let $D_\vp$ be a normal dual truncated Toeplitz operator with symbol $\vp\in L^\infty$. If the essential range $\essran(\vp)$ is a convex set (for example, if $\vp$ is continuous), then the minimum modulus is exactly the essential infimum of the symbol
\[
m(D_\vp)=\essinf_{z\in\T}|\vp(z)|.
\]
In this case, $D_\vp$ is bounded below if and only if $\vp$ is invertible in $L^\infty$.
\end{proposition}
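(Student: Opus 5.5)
The plan is to sharpen Proposition~\ref{prop:min-modulus-inequality} by showing that its two bounds coincide when $\essran(\vp)$ is convex. Write $K:=\essran(\vp)$; it is compact and nonempty, as the spectrum of $M_\vp$. If $K$ is convex, then $\operatorname{convex}(K)=K$. Hence
\[
\dist\bigl(0,\operatorname{convex}(\essran(\vp))\bigr)=\dist(0,\essran(\vp))=\essinf_{z\in\T}|\vp(z)|,
\]
where the last equality is the identity $\dist(0,\essran(\vp))=\essinf|\vp|$ already used in the proof of Proposition~\ref{prop:min-modulus-inequality}. I will check this identity quickly: $\essinf|\vp|$ is the largest $a$ with $|\vp|\ge a$ a.e. Every $\lambda\in K$ is a limit of values taken on sets of positive measure, so $|\lambda|\ge\essinf|\vp|$. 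Conversely, for each $\varepsilon>0$ the set $\{|\vp|<\essinf|\vp|+\varepsilon\}$ has positive measure. Compactness of $K$ then yields a point of $K$ of modulus at most $\essinf|\vp|$. The two-sided inequality of Proposition~\ref{prop:min-modulus-inequality} therefore collapses to $m(D_\vp)=\essinf|\vp|$.

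The parenthetical claim about continuous $\vp$ needs more care, and I expect it to be the main obstacle. For continuous $\vp$, the essential range is $\vp(\T)$, a closed curve, and this is generally not convex (for example $\vp(z)=z$). So the example cannot be justified through convexity of $K$ alone. Instead I would argue directly that for normal $D_\vp$ one has $\sigma(D_\vp)$ containing $\essran(\vp)$, together with a bound of the spectrum of the form $\dist(0,\sigma(D_\vp))\ge\dist(0,K)$. The route I have in mind is the following. If $0\notin K$, then $\vp^{-1}\in L^\infty$. One would then like $D_\vp$ to be invertible with $\|D_\vp^{-1}\|\le\|\vp^{-1}\|_\infty$, and similarly after translating by $\lambda$ with $|\lambda|<\dist(0,K)$. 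In the proof I would state the parenthetical with the honest hypothesis that $K$ is convex, for instance an interval, a disc, or the range of a real-valued continuous $\vp$ on $\T$, which is an interval. I would treat continuous symbols with convex image as the intended example.

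For the final assertion, I combine the formula with Proposition~\ref{prop:adj-bddbelow-invrt-positiveminmod}. $D_\vp$ is complex symmetric, so $D_\vp$ is bounded below if and only if $m(D_\vp)>0$. By the formula just proved, this holds if and only if $\essinf|\vp|>0$. As recalled in the preliminaries, that is equivalent to $0\notin\essran(\vp)$, i.e. $\vp^{-1}\in L^\infty$, i.e. $\vp$ is invertible in $L^\infty$. This gives the stated equivalence, and it also shows that bounded below is the same as invertibility of $D_\vp$.
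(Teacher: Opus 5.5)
Your main argument is correct and is essentially the paper's. The paper quotes Gu's Corollary 4.6 for $\sigma(D_\vp)=\essran(\vp)$. That is exactly what the two inclusions $\essran(\vp)\subseteq\sigma(D_\vp)\subseteq\operatorname{convex}(\essran(\vp))$ behind Proposition~\ref{prop:min-modulus-inequality} give when $\essran(\vp)$ is convex. Your derivation of the final equivalence from $m(D_\vp)=\essinf|\vp|$ is also fine.

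Your objection to the parenthetical ``for example, if $\vp$ is continuous'' is misplaced, because you dropped the normality hypothesis. Your counterexample $\vp(z)=z$ is not admissible. By \eqref{eq:defect-Du} and \eqref{eq:defect-Du-star}, and since $u\perp\bar z$, one has $D_z^*D_z u=u$ while $D_zD_z^*u=|u(0)|^2u$. Hence $D_z$ (denoted $D_u$ in Section 3) is never normal.

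The paper appeals to the characterization of normal DTTOs in the example right after this proposition. By that characterization, normality of $D_\vp$ forces $\vp=\alpha\psi+\beta$ with $\psi$ real-valued and $\alpha,\beta\in\C$. If $\vp$ is also continuous, then $\essran(\vp)=\alpha\psi(\T)+\beta$ is a point or a compact segment: it is the image of the connected set $\T$ lying inside a line. So it is convex. The continuous case is therefore already covered by your first paragraph, and nothing in the statement needs to be weakened.

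The alternative route you sketch, that $\vp^{-1}\in L^\infty$ should make a normal $D_\vp$ invertible, cannot work. Take $u(z)=z$ and let $\psi=1$ on the upper half circle and $\psi=-1$ on the lower half. Then $D_\psi$ is self-adjoint and $\psi^{-1}=\psi\in L^\infty$. But $\psi\perp 1$, so $\psi\in\clk_u^\perp$, and $D_\psi\psi=P_{\clk_u^\perp}(\psi^2)=P_{\clk_u^\perp}(1)=0$. This gives $m(D_\psi)=0<1=\essinf|\psi|$. Convexity of $\essran(\vp)$ (or continuity together with normality) is therefore genuinely needed; invertibility of the symbol alone does not suffice.
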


\begin{proof}
For any normal DTTO, the minimum modulus is determined by the distance between origin and the spectrum $\sigma(D_\vp)$ and since $\essran (\vp)$ is convex, $\sigma (D_\vp)=\essran (\vp)$ (cf. \cite[Corollary 4.6]{Gu2021}).
 Therefore,
\[
m(D_\vp) =\dist\left( 0, \sigma (D_\vp)\right)= \dist\bigl(0, \essran(\vp)\bigr) = \inf\{|\lambda| : \lambda \in \essran(\vp)\}.
\]
It is well known that the infimum of the modulus of values in the essential range is equal to the essential infimum of the function
\[
\inf\{|\lambda| : \lambda \in \essran(\vp)\} = \essinf_{z \in \T} |\vp(z)|.
\]
Thus, $m(D_\vp)=\essinf_{z\in \T}|\vp(z)|$.
\end{proof}

\begin{example}
Consider the symbol $\vp(z) = z + \bar{z} + 2i$.
Note that $z + \bar{z} = 2\Re(z)$ is real-valued continuous function on $\T$. Thus, $\vp$ is of the form
\[
\vp = \psi + \beta, \quad \text{where } \psi(z)=2\Re(z) \text{ is real, and } \beta = 2i.
\]
By the characterization of normal dual truncated Toeplitz operators, $D_\vp$ is normal.

First note that the essential range of $\vp$ on the unit circle is the image of the interval $[-1, 1]$ under the map $t \mapsto 2t + 2i$:
    \[
    \essran(\vp) = \{ 2x + 2i : x \in [-1, 1] \}.
    \]

    The minimum modulus is given by
    \[
    m(D_\vp) = \dist(0, \sigma(D_\vp)) = |2i| = 2.
    \]

   By applying formula described in Proposition \ref{cor:min-modulus-convex}, we also obtain
    \[
    m(D_\vp) =\displaystyle \essinf_{z\in\T} |z + \bar{z} + 2i| = \essinf_{x\in[-1,1]} |2x + 2i|.
    \]
    The modulus is $\sqrt{(2x)^2 + 2^2} = \sqrt{4x^2 + 4}$.
    This function is minimized when $x=0$, yielding
    \[
    \sqrt{0 + 4} = 2.
    \]
We conclude $m(D_\vp)=2$, verifying the Proposition \ref{cor:min-modulus-convex}.
\end{example}

\section{DTTOs with unimodular symbols }
If $\vp\in L^\infty$ with $|\vp|=1$ a.e, then the multiplication operator $M_\vp$ is an isometry. With the decomposition $L^2 = \clk_u \oplus \clk_u^\perp$, the multiplication operator $M_\vp$ has the block matrix form
\[M_\vp = \begin{pmatrix} A_\vp & B_{\bar{\vp}}^* \\ B_\vp & D_\vp \end{pmatrix}.\]
Here, $A_\vp = P_{\clk_u}\restr{ M_\vp }{\clk_u}$ and $B_\vp = P_{\clk_u^\perp} \restr{M_\vp }{\clk_u}$.

Since $M_\vp^* M_\vp = I$, we have
\[\begin{pmatrix} A_\vp^* & B_\vp^* \\ B_{\bar{\vp}} & D_\vp^* \end{pmatrix} \begin{pmatrix} A_\vp & B_{\bar{\vp}}^* \\ B_\vp & D_\vp \end{pmatrix} = \begin{pmatrix} I_{\clk_u} & 0 \\ 0 & I_{\clk_u^\perp} \end{pmatrix}.\]

We obtain
\[A_\vp^* A_\vp + B_\vp^* B_\vp = I_{\clk_u},\]

or
\begin{equation}\label{eq:Bphi-Aphi}
    B_\vp^* B_\vp = I_{\clk_u} - A_\vp^* A_\vp.
\end{equation}

Taking the norm, we get
\[\|B_\vp\|^2 = \|B_\vp^* B_\vp\| = \|I_{\clk_u} - A_\vp^* A_\vp\|.\]
But we have
\[\|I_{\clk_u} - A_\vp^* A_\vp\| = 1 - \inf_{\|x\|=1} \langle A_\vp^* A_\vp x, x \rangle = 1 - m(A_\vp)^2.\]

Therefore,
\begin{equation}\label{minmod TTO}
    \|B_\vp\| = \sqrt{1 - m(A_\vp)^2}.
\end{equation}

For any $f\in L^2$, we have the following identity
\begin{equation}
\|f\|^2=\|P_{\clk_u} f\|^2 + \|(I-P_{\clk_u})f\|^2.
\end{equation}

The following proposition establishes a precise link between the minimum modulus of a dual truncated Toeplitz operator $D_\vp$ and the norm of the operator $B_{\bar{\vp}}$ under the assumption that the inducing symbol $\vp$ is unimodular.

\begin{proposition}\label{prop:minmod-Dphi-norm-Bphi}
Let $\vp\in L^\infty$ with $|\vp|=1$ a.e. on $\T$. Then 
\begin{equation}
    m(D_\vp)=\sqrt{1-\|B_{\bar{\vp}}\|^2}.
\end{equation}
\end{proposition}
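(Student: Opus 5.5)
We would mirror the computation that produced \eqref{minmod TTO}, but reading off the second diagonal entry of $M_\vp^*M_\vp=I$ instead of the first. Since $|\vp|=1$ a.e., the operator $M_\vp$ is unitary on $L^2$ (not merely an isometry), and its adjoint is $M_{\bar\vp}$. Relative to $L^2=\clk_u\oplus\clk_u^\perp$ we already have
\[
M_\vp=\begin{pmatrix} A_\vp & B_{\bar\vp}^* \\ B_\vp & D_\vp\end{pmatrix}.
\]
For the $(1,2)$ entry, note that $P_{\clk_u}\restr{M_\vp}{\clk_u^\perp}$ is the adjoint of $P_{\clk_u^\perp}\restr{M_{\bar\vp}}{\clk_u}=B_{\bar\vp}$. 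This holds because $\langle P_{\clk_u}\vp g,f\rangle=\langle g,\bar\vp f\rangle=\langle g,P_{\clk_u^\perp}\bar\vp f\rangle$ for $g\in\clk_u^\perp$ and $f\in\clk_u$.

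The first key step is to compare the $(2,2)$ entries of $M_\vp^*M_\vp=I$. This gives
\[
B_{\bar\vp}B_{\bar\vp}^*+D_\vp^*D_\vp=I_{\clk_u^\perp},
\qquad\text{i.e.}\qquad
D_\vp^*D_\vp=I_{\clk_u^\perp}-B_{\bar\vp}B_{\bar\vp}^*.
\]
Equivalently, and more elementarily, for $x\in\clk_u^\perp$ the identity $\|f\|^2=\|P_{\clk_u}f\|^2+\|(I-P_{\clk_u})f\|^2$ applied to $f=\vp x$ gives
\[
\|x\|^2=\|\vp x\|^2=\|B_{\bar\vp}^*x\|^2+\|D_\vp x\|^2.
\]

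The second step takes the infimum over unit vectors $x\in\clk_u^\perp$. From the displayed identity,
\[
m(D_\vp)^2=\inf_{\|x\|=1}\|D_\vp x\|^2=1-\sup_{\|x\|=1}\|B_{\bar\vp}^*x\|^2=1-\|B_{\bar\vp}^*\|^2=1-\|B_{\bar\vp}\|^2.
\]
Here we use $\|T^*\|=\|T\|$. Taking square roots finishes the argument, since the right-hand side is nonnegative by the identity itself, so $\|B_{\bar\vp}\|\le 1$.

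The only point needing care is identifying the off-diagonal block correctly: we must confirm that the $(1,2)$ entry is $B_{\bar\vp}^*$ and not $B_\vp^*$. This is exactly the adjoint computation given in the first paragraph. Everything else is the norm-splitting identity together with the definition of minimum modulus, as in the derivation of \eqref{minmod TTO}.
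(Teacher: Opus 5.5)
Your proposal is correct and is essentially the paper's own argument. The paper also applies the splitting $\|\vp h\|^2=\|P_{\clk_u}\vp h\|^2+\|(I-P_{\clk_u})\vp h\|^2$ to unit vectors $h\in\clk_u^\perp$ to get $\|D_\vp h\|^2=1-\|B_{\bar\vp}^*h\|^2$, and then takes the infimum. Your adjoint computation showing that the $(1,2)$ block is $B_{\bar\vp}^*$, and your remark that $\|B_{\bar\vp}\|\le 1$, supply small details the paper leaves implicit.
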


\begin{proof}
    Let $h\in \clk_u^\perp$ with $\|h\|=1$. Then
    \begin{align*}
        \|D_\vp h\|^2&=\|(I-P_{\clk_u})\vp h\|^2\\
        &=\|\vp h\|^2 - \|P_{\clk_u} \vp h\|^2 \\
        &=1-\|B^*_{\bar{\vp}}h\|^2.
    \end{align*}

    Thus $\inf_{\|h\|=1} \|D_\vp h\|^2=1-\sup_{\|h\|=1}\|B^*_{\bar{\vp}}h\|^2$, that is, $m(D_\vp)^2=1-\|B^*_{\bar{\vp}}\|^2$. Consequently, 
\begin{equation}
        m(D_\vp)=\sqrt{1-\|B_{\bar{\vp}}\|^2}.\qedhere
\end{equation}
\end{proof}

\begin{remark}
    By Proposition \ref{prop:minmod-Dphi-norm-Bphi} and the relation \eqref{minmod TTO}, for unimodular $\vp\in L^\infty$, we get $m(A_\vp)=m(D_{\bar{\vp}})=m(D_\vp)$. 
\end{remark}

The following proposition establishes a fundamental structural link between truncated Toeplitz operators (TTOs) and their dual counterparts. In the study of operators on model spaces, the interaction between the space $\clk_u$ and its orthogonal complement $\clk_u^\perp$ is often complex; however, this result demonstrates that the dual truncated Toeplitz operator $D_\vp$, when pulled back to the model space via the isometry $M_u$ and its adjoint $M_{\bar{u}}$, can be expressed entirely in terms of operators acting on $\clk_u$. Specifically, for analytic symbols, this relation collapses into a direct equality, effectively allowing us to transfer spectral and norm properties from the dual space back to the more manageable TTOs. This correspondence is a vital tool for characterizing the minimum modulus and invertibility of block operators where these two classes of operators appear simultaneously.

\begin{proposition}\label{prop:relation-A-D}
Let $u$ be a nonconstant inner function, and let $\vp\in L^\infty$. Then the following identity holds on $\clk_u$:
\begin{equation}\label{eq:main-relation}
\restr{P_{\clk_u}\,M_{\bar u}\,D_\vp\,M_u}{\clk_u}=  A_{\vp}-A_{\bar u}A_{\vp u}.
\end{equation}
In particular, if $\vp\in H^\infty$, then $A_{\vp u}=0$ and
\begin{equation}\label{eq:analytic-reduction}
\restr{P_{\clk_u}\,M_{\bar u}\,D_\vp\,M_u}{\clk_u}
=
A_\vp .
\end{equation}
\end{proposition}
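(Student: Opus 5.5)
The plan is a direct computation on an arbitrary $f\in\clk_u$. The only structural facts needed are that $|u|=1$ a.e. on $\T$, so $M_{\bar u}M_u=I$ on $L^2$, and that $P_{\clk_u^\perp}=I-P_{\clk_u}$.

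First, fix $f\in\clk_u\subset H^2$. Then $uf\in uH^2\subset\clk_u^\perp$, so $D_\vp$ may be applied to $M_uf$, and
\[
D_\vp M_u f=(I-P_{\clk_u})(\vp u f)=\vp u f-P_{\clk_u}(\vp u f).
\]

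Second, multiply by $\bar u$ and project onto $\clk_u$, splitting into two terms:
\[
P_{\clk_u}M_{\bar u}D_\vp M_u f = P_{\clk_u}(\bar u\vp u f)-P_{\clk_u}\bigl(\bar u\,P_{\clk_u}(\vp u f)\bigr).
\]
In the first term, $\bar u u=1$ a.e., so it equals $P_{\clk_u}(\vp f)=A_\vp f$. In the second term, $g:=P_{\clk_u}(\vp uf)$ is by definition $A_{\vp u}f$ and lies in $\clk_u$. Hence $P_{\clk_u}(\bar u g)=A_{\bar u}g=A_{\bar u}A_{\vp u}f$. Together these give \eqref{eq:main-relation}.

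For the analytic case, take $\vp\in H^\infty$ and $f\in\clk_u$. Then $\vp f\in H^2$, so $\vp u f\in uH^2=\clk_u^\perp\cap H^2$. Therefore $A_{\vp u}f=P_{\clk_u}(\vp u f)=0$, and \eqref{eq:analytic-reduction} follows immediately from \eqref{eq:main-relation}.

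There is no real obstacle here. The only point needing care is the bookkeeping: checking that $M_u$ maps $\clk_u$ into the domain $\clk_u^\perp$ of $D_\vp$, and that the intermediate vector $P_{\clk_u}(\vp u f)$ lies in $\clk_u$, so that the compressed operator $A_{\bar u}$ (rather than $M_{\bar u}$) is what appears.
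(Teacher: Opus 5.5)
Your proof is correct and follows the paper's argument essentially line for line: you compute $D_\vp M_u f=(I-P_{\clk_u})(\vp u f)$, use $\bar u u=1$ to get $A_\vp f$ from the first term, identify the second term as $A_{\bar u}A_{\vp u}f$ because $A_{\vp u}f\in\clk_u$, and get the analytic case from $\vp u f\in uH^2$. As a side remark, $A_{\bar u}=A_u^*=0$ on $\clk_u$ (since $ug\in uH^2\perp\clk_u$ for $g\in\clk_u$), so the correction term $A_{\bar u}A_{\vp u}$ actually vanishes for every $\vp\in L^\infty$, not only for analytic $\vp$.
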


\begin{proof}
Let $f\in\clk_u$. Since $uf\in uH^2\subset\clk_u^\perp$, we have
\[
D_\vp(uf)
=
P_{\clk_u^\perp}(\vp\,u f)
=
(I-P_{\clk_u})(\vp\,u f).
\]
Applying $M_{\bar u}$ and then projecting onto $\clk_u$ yields
\begin{align*}
P_{\clk_u}M_{\bar u}D_\vp M_u f
&=
P_{\clk_u}M_{\bar u}\left(\vp\,u f\right)-P_{\clk_u}M_{\bar u}P_{\clk_u}\left(\vp\,u f\right)\\
&=
P_{\clk_u}\left((\bar u\,\vp\,u)f\right)-P_{\clk_u}M_{\bar u}\left(A_{\vp u}f\right)\\
&=
P_{\clk_u}\left(\vp f\right)-P_{\clk_u}M_{\bar u}\left(A_{\vp u}f\right).
\end{align*}
The first term equals $A_{\vp}f$ by definition. Since
$A_{\vp u}f\in\clk_u$, the second term can be written as
\[
P_{\clk_u}M_{\bar u}(A_{\vp u}f)
=
A_{\bar u}(A_{\vp u}f)
=
(A_{\bar u}A_{\vp u})f.
\]
This proves \eqref{eq:main-relation}.

If $\vp\in H^\infty$, then $\vp u\in uH^\infty$, so
$(\vp u)f\in uH^2$ for every $f\in\clk_u$, which implies
$A_{\vp u}=0$. Hence \eqref{eq:main-relation} reduces to
\eqref{eq:analytic-reduction}.
\end{proof}

\begin{lemma}\label{lem:norm-Bphi}
Let $u$ be a nonconstant inner function. Consider the operator $B_\vp: \clk_u \to \clk_u^\perp$ defined by $B_\vp f = P_{\clk_u^\perp}(\vp f),\, f\in \clk_u$.

\begin{enumerate}
    \item For any symbol $\vp \in L^\infty$, $B_\vp$ admits the orthogonal decomposition
    \begin{equation}\label{eq:Bphi-decomp}
    B_\vp = M_u \restr{T_{\bar{u}\vp}}{\clk_u} + \restr{H_\vp}{\clk_u},
    \end{equation}
    where $T_{\bar{u}\vp}$ is the Toeplitz operator with symbol $\bar{u}\vp$ and $H_\vp$ is the Hankel operator with symbol $\vp$. Consequently, the norm is given by
    \begin{equation}\label{eq:norm-Bphi-Linf}
    \|B_\vp\| = \sup_{f \in \clk_u, \|f\|=1} \sqrt{ \|T_{\bar{u}\vp} f\|^2 + \|H_\vp f\|^2 }.
    \end{equation}

    \item  If $\vp \in H^\infty$, then $H_\vp = 0$ and the range of $B_\vp$ is contained in $uH^2$. The expression simplifies to
    \begin{equation}\label{eq:Bphi-Toeplitz}
    B_\vp = M_u \restr{T_{\bar u\vp}}{\clk_u}.
    \end{equation}
    In particular, $\|B_\vp\| = \left\|\restr{T_{\bar u\vp}}{\clk_u}\right\|$, and $B_\vp=0$ if and only if $\vp \clk_u \subset \clk_u$ (equivalently, $\vp$ is constant).
\end{enumerate}
\end{lemma}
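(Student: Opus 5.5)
The identity \eqref{eq:Bphi-decomp} comes from splitting $\clk_u^\perp=uH^2\oplus H^2_-$ and computing the projection onto each summand separately. For $f\in\clk_u$, write $P_{\clk_u^\perp}=P_{uH^2}+P_-$, where $P_-=I-P$. I will use the standard formula $P_{uH^2}=M_uPM_{\bar u}$, which holds because $M_u$ is an isometry of $L^2$ mapping $H^2$ onto $uH^2$. This gives
\[
B_\vp f=P_{uH^2}(\vp f)+P_-(\vp f)=u\,P(\bar u\vp f)+H_\vp f=M_u T_{\bar u\vp}f+H_\vp f .
\]
The two pieces lie in the orthogonal subspaces $uH^2$ and $H^2_-$. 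Since $M_u$ is an isometry, $\|B_\vp f\|^2=\|T_{\bar u\vp}f\|^2+\|H_\vp f\|^2$. Taking the supremum over unit vectors $f\in\clk_u$ gives \eqref{eq:norm-Bphi-Linf}.

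For part (2), take $\vp\in H^\infty$. Then $\vp f\in H^2$ for $f\in\clk_u$, so $H_\vp f=P_-(\vp f)=0$. The decomposition reduces to \eqref{eq:Bphi-Toeplitz}, the range of $B_\vp$ lies in $uH^2$, and the norm identity follows from the isometric property of $M_u$. Moreover $B_\vp f=0$ exactly when $\vp f\in H^2\ominus uH^2=\clk_u$. Hence $B_\vp=0$ if and only if $\vp\clk_u\subset\clk_u$.

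The main obstacle is the claim that $\vp\clk_u\subset\clk_u$ forces $\vp$ to be constant; the converse is trivial. My plan is to test the inclusion on two explicit elements of $\clk_u$, namely $k_0^u=1-\ol{u(0)}u$ and $S^*u=\bar z(u-u(0))$. I will use the description
\[
\clk_u=H^2\cap u\,\ol{zH^2},
\]
that is, $g\in\clk_u$ if and only if $g\in H^2$ and $\bar u g\in\ol{zH^2}$.

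From $\vp k_0^u\in\clk_u$ we get $\bar u\vp-\ol{u(0)}\vp\in\ol{zH^2}$, so $\bar u\vp=\ol{u(0)}\vp+\ol{zh}$ for some $h\in H^2$. From $\vp S^*u\in\clk_u$ we get $\bar z\vp-u(0)\bar z\bar u\vp\in\ol{zH^2}$. Substituting the first relation into the second yields
\[
(1-|u(0)|^2)\bar z\vp-u(0)\bar z\,\ol{zh}\in\ol{zH^2}.
\]
Since $\bar z\,\ol{zH^2}\subset\ol{zH^2}$ and $|u(0)|<1$, this gives $\bar z\vp\in\ol{zH^2}$. Writing $\vp=\sum_{n\ge0}a_nz^n$, the coefficient of $z^{n-1}$ in $\bar z\vp$ is $a_n$. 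Membership in $\ol{zH^2}$ forces $a_n=0$ for all $n\ge1$, so $\vp=a_0$ is constant.

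The points I will double-check are that $k_0^u$ and $S^*u$ genuinely belong to $\clk_u$, and the bookkeeping of the anti-analytic terms in the last step.
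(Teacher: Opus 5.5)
Your proposal is correct. For part (1), and for the reduction $H_\vp=0$, $B_\vp=M_u\restr{T_{\bar u\vp}}{\clk_u}$ in part (2), you follow essentially the paper's argument: split $P_{\clk_u^\perp}=P_{uH^2}+P_-$, use $P_{uH^2}=M_uPM_{\bar u}$, then use orthogonality of $uH^2$ and $H^2_-$.

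The genuine difference is the last equivalence, that $\vp\clk_u\subset\clk_u$ holds if and only if $\vp$ is constant. The paper simply cites \cite[Theorem 6.12]{Garcia2016} for this. You instead prove it directly, using $\clk_u=H^2\cap u\,\ol{zH^2}$ and testing the inclusion on the two vectors $k_0^u$ and $S^*u$.

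The points you wanted to double-check are fine:
\begin{itemize}
\item $\bar u k_0^u=\ol{u-u(0)}\in\ol{zH^2}$, so $k_0^u\in\clk_u$.
\item $\bar u\, S^*u=\bar z-u(0)\,\ol{zu}\in\ol{zH^2}$, so $S^*u\in\clk_u$.
\item In the substitution step, $\bar z\,\ol{zh}=\ol{z^2h}\in\ol{zH^2}$, so that term can be discarded. Dividing by $1-|u(0)|^2\neq 0$ then gives $\bar z\vp\in\ol{zH^2}$, which forces $a_n=0$ for all $n\ge 1$.
\end{itemize}

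Your route is self-contained and elementary. It also gives a slightly sharper fact: an $H^\infty$ function mapping just $k_0^u$ and $S^*u$ into $\clk_u$ must already be constant. The paper's route is shorter but relies on the external multiplier theorem for model spaces.
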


\begin{proof}
To prove (1), let $f \in \clk_u$. Since $\clk_u^\perp = uH^2 \oplus H^2_-$, we write $P_{\clk_u^\perp} = P_{uH^2} + P_{-}$, so
\[
B_\vp f = P_{uH^2}(\vp f) + P_{-}(\vp f).
\]
For the first term, using the identity $P_{uH^2} = M_u P M_{\bar{u}}$, we have
\[
P_{uH^2}(\vp f) = M_u P(\bar{u}\vp f) = M_u T_{\bar{u}\vp} f.
\]
For the second term, by definition, $P_{-}(\vp f) = H_\vp f$.
Substituting these back yields decomposition \eqref{eq:Bphi-decomp}.
Since $uH^2$ and $H^2_-$ are orthogonal, and $M_u$ is isometric, we have
\[
\|B_\vp f\|^2 = \|M_u T_{\bar{u}\vp} f\|^2 + \|H_\vp f\|^2 = \|T_{\bar{u}\vp} f\|^2 + \|H_\vp f\|^2.
\]
Taking the supremum over unit vectors yields \eqref{eq:norm-Bphi-Linf}.

To prove (2), assume that $\vp \in H^\infty$. Then $\vp f \in H^2$, so $P_-(\vp f) = 0$, which implies $H_\vp f = 0$ and $B_\vp f \in uH^2$. The decomposition \eqref{eq:Bphi-decomp} thus reduces to \eqref{eq:Bphi-Toeplitz}. The norm equality follows immediately. Finally, $B_\vp = 0$ is equivalent to $P_{uH^2}(\vp f) = 0$ for all $f \in \clk_u$, which means $\vp f \in H^2 \cap (uH^2)^\perp = \clk_u$, that is, $\vp \clk_u \subset \clk_u$ which is equivalent to $\vp$ being constant (cf. \cite[Theorem 6.12]{Garcia2016}).
\end{proof}

\begin{theorem}\label{thm:min-modulus-Dphi}
Let $u$ be a nonconstant inner function, and let $\vp \in L^\infty$ be a unimodular function. The minimum modulus of $D_\vp$ is given by
\begin{equation}\label{eq:mDphi}
m(D_\vp) = \sqrt{ 1 - \|B_{\bar{\vp}}\|^2 } = \sqrt{ 1 - \sup_{f \in \clk_u, \|f\|=1} \left( \|T_{\ol{u\vp}} f\|^2 + \|H_{\bar{\vp}} f\|^2 \right) }.
\end{equation}
Furthermore, $m(D_\vp)$ satisfies 
\begin{equation}\label{eq:mDphi-bounds}
\begin{split}
&\sqrt{ \max\left(0, 1 - \left( \|\restr{T_{\ol{u\vp}}}{\clk_u}\|^2 + \|\restr{H_{\bar{\vp}}}{\clk_u}\|^2 \right) \right) }\\
&\leq m(D_\vp) \leq
\min \left( \sqrt{1 - \|\restr{T_{\ol{u\vp}}}{\clk_u}\|^2}, \; \sqrt{1 - \|\restr{H_{\bar{\vp}}}{\clk_u}\|^2} \right).
\end{split}
\end{equation}
\end{theorem}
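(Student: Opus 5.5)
The plan is to derive \eqref{eq:mDphi} by chaining two earlier results, and then obtain the two-sided bounds \eqref{eq:mDphi-bounds} by elementary estimates of the supremum that appears there.

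\textbf{The formula \eqref{eq:mDphi}.} Since $\vp$ is unimodular, Proposition~\ref{prop:minmod-Dphi-norm-Bphi} gives
\[
m(D_\vp)=\sqrt{1-\|B_{\bar\vp}\|^2}.
\]
Next I apply Lemma~\ref{lem:norm-Bphi}(1) with the symbol $\bar\vp\in L^\infty$. The Toeplitz symbol there becomes $\bar u\bar\vp=\ol{u\vp}$, so
\[
\|B_{\bar\vp}\|^2=\sup_{f\in\clk_u,\|f\|=1}\bigl(\|T_{\ol{u\vp}}f\|^2+\|H_{\bar\vp}f\|^2\bigr).
\]
Substituting this into the previous display yields \eqref{eq:mDphi}. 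Note that $\|B_{\bar\vp}\|\le\|\bar\vp\|_\infty=1$, so the quantity under the square root is nonnegative.

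\textbf{The lower bound.} For each unit vector $f\in\clk_u$,
\[
\|T_{\ol{u\vp}}f\|^2+\|H_{\bar\vp}f\|^2\le\|\restr{T_{\ol{u\vp}}}{\clk_u}\|^2+\|\restr{H_{\bar\vp}}{\clk_u}\|^2 .
\]
Hence $\|B_{\bar\vp}\|^2$ is at most the right-hand side, and so
\[
m(D_\vp)^2\ge 1-\bigl(\|\restr{T_{\ol{u\vp}}}{\clk_u}\|^2+\|\restr{H_{\bar\vp}}{\clk_u}\|^2\bigr).
\]
Since also $m(D_\vp)^2\ge0$, this gives the bound with the $\max(0,\cdot)$.

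\textbf{The upper bound.} The supremum in \eqref{eq:mDphi} dominates each of its two separate suprema, because each summand is nonnegative. Thus
\[
\|B_{\bar\vp}\|^2\ge\max\bigl(\|\restr{T_{\ol{u\vp}}}{\clk_u}\|^2,\ \|\restr{H_{\bar\vp}}{\clk_u}\|^2\bigr).
\]
Therefore $m(D_\vp)^2\le 1-\|\restr{T_{\ol{u\vp}}}{\clk_u}\|^2$ and $m(D_\vp)^2\le 1-\|\restr{H_{\bar\vp}}{\clk_u}\|^2$, which is the stated minimum. Both radicands are nonnegative because each restricted norm is bounded by $\|B_{\bar\vp}\|\le1$.

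\textbf{Main point to check.} There is no real obstacle here. The only care needed is applying Lemma~\ref{lem:norm-Bphi} to $\bar\vp$ rather than $\vp$, and checking the symbol bookkeeping $\bar u\bar\vp=\ol{u\vp}$. One should also confirm that the orthogonal decomposition in that lemma holds for a general $L^\infty$ symbol (not only analytic ones), so that it applies to $\bar\vp$.
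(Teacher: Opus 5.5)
Your proposal is correct and follows the paper's proof essentially step for step. You combine Proposition~\ref{prop:minmod-Dphi-norm-Bphi} with Lemma~\ref{lem:norm-Bphi}(1) applied to the symbol $\bar\vp$, then bound the supremum of the sum above by the sum of the suprema and below by each individual supremum. Your added observation that $\|B_{\bar\vp}\|\le 1$ keeps every radicand nonnegative is a small point the paper leaves implicit.
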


\begin{proof}
By Proposition \ref{prop:minmod-Dphi-norm-Bphi}, we have
\[
m(D_\vp) = \sqrt{1 - \|B_{\bar{\vp}}\|^2}.
\]
From Lemma \ref{lem:norm-Bphi}, applying the formula to the symbol $\bar{\vp}$, we have 
\[
\|B_{\bar{\vp}}\|^2 = \sup_{f \in \clk_u, \|f\|=1} \left( \|T_{\bar{u}\bar{\vp}} f\|^2 + \|H_{\bar{\vp}} f\|^2 \right).
\]
Since $\bar{u}\bar{\vp} = \ol{u\vp}$, we substitute this directly to obtain the exact formula \eqref{eq:mDphi}.

To derive the lower bound, we use the property that the supremum of a sum is bounded by the sum of the supremum
\[
\sup_{f \in \clk_u, \|f\|=1} \left( \|T_{\ol{u\vp}} f\|^2 + \|H_{\bar{\vp}} f\|^2 \right) 
\le \sup_{f\in \clk_u,\|f\|=1} \|T_{\ol{u\vp}} f\|^2 + \sup_{f\in \clk_u,\|f\|=1} \|H_{\bar{\vp}} f\|^2.
\]
This implies
\[
\|B_{\bar{\vp}}\|^2 \le \|T_{\ol{u\vp}}|_{\clk_u}\|^2 + \|H_{\bar{\vp}}|_{\clk_u}\|^2.
\]
Substituting this inequality into the expression $m(D_\vp) = \sqrt{1 - \|B_{\bar{\vp}}\|^2}$, establishing the lower bound. (We include the maximum with 0 to account for cases where the sum of squared norms exceeds $1$).

To derive the upper bound, we observe that for any non-negative functions $F(f)$ and $G(f)$, we have
\[
\sup_{f} (F(f) + G(f)) \geq \sup_{f} F(f) \quad \text{and} \quad \sup_{f} (F(f) + G(f)) \geq \sup_{f} G(f).
\]
Therefore, $\|B_{\bar{\vp}}\|^2 \geq \|T_{\ol{u\vp}}|_{\clk_u}\|^2$ and $\|B_{\bar{\vp}}\|^2 \geq \|H_{\bar{\vp}}|_{\clk_u}\|^2$.
 Thus
\[
m(D_\vp) \le \sqrt{1 - \|\restr{T_{\ol{u\vp}}}{\clk_u}\|^2} \quad \text{and} \quad m(D_\vp) \le \sqrt{1 - \|\restr{H_{\bar{\vp}}}{\clk_u}\|^2}.
\]
Taking the minimum of these two inequalities gives the upper bound in \eqref{eq:mDphi-bounds}.
\end{proof}

\begin{example}
Consider the inner function $u(z) = z^2$ and the symbol $\vp(z) = z$. The corresponding model space is $\clk_u = \operatorname{span}\{1, z\}$. Note that $\vp \in L^\infty$ and $|\vp|=1$ a.e. on $\T$. If $f \in \clk_u$ be a unit vector, then $f(z) = c_0 + c_1 z$ with $|c_0|^2 + |c_1|^2 = 1,\text{ where } c_0,c_1\in \C$.
We now compute $H_{\bar{\vp}} f = P_{-}(\bar{z}(c_0 + c_1 z)) = P_{-}(c_0 \bar{z} + c_1) = c_0 \bar{z}$. Therefore, $\|H_{\bar{\vp}} f\|^2 = |c_0|^2$. Now $\ol{u\vp} = \bar{z}^3$.
Then $T_{\ol{u\vp}} f = P(\bar{z}^3(c_0 + c_1 z)) = P(c_0 \bar{z}^3 + c_1 \bar{z}^2) = 0$. Thus, $\|T_{\ol{u\vp}} f\|^2 = 0$.

Substituting these into the formula, we get
\[
m(D_u) = \sqrt{1 - \sup_{|c_0|^2+|c_1|^2=1} (|c_0|^2 + 0)} = \sqrt{1 - 1} = 0.
\]

\end{example}

\begin{example}
Consider the inner function $u(z) = z^2$ (so $\clk_u = \operatorname{span}\{1, z\}$) and the unimodular symbol $\vp(z)$ defined by 
\[
\vp(z) = \bar{z} \left( \frac{z - \alpha}{1 - \alpha z} \right), \quad \text{where } \alpha \in (0,1).
\]
 We compute $m(D_\vp)$.

Now
\[
\bar{\vp}(z) = z \left( \frac{1 - \alpha z}{z - \alpha} \right).
\]
For $|z|=1 > \alpha$, we have the expansion $\frac{1}{z-\alpha} = \frac{\bar{z}}{1 - \alpha \bar{z}} = \sum_{k=0}^\infty \alpha^k \bar{z}^{k+1}$.
Thus
\[
\bar{\vp}(z) = z(1 - \alpha z) \sum_{k=0}^\infty \alpha^k \bar{z}^{k+1} = (z - \alpha z^2) (\bar{z} + \alpha \bar{z}^2 + \alpha^2 \bar{z}^3 + \cdots).
\]
This implies
\[
\bar{\vp}(z) = \underbrace{-\alpha z + (1-\alpha^2)}_{\text{Analytic part } (\in H^2)} + \underbrace{(1-\alpha^2) \sum_{k=1}^\infty \alpha^k \bar{z}^k}_{\text{Co-analytic part } (\in H^2_-)}.
\]

Now, let $f \in \clk_u$ be a unit vector $f(z) = c_0 + c_1 z$ with $|c_0|^2 + |c_1|^2 = 1$.

\[
H_{\bar{\vp}}(c_0 + c_1 z) = P_{-} \left[ \left( (1-\alpha^2)\sum_{k=1}^\infty \alpha^k \bar{z}^k \right) (c_0 + c_1 z) \right].
\]
Note that
\begin{align*}
H_{\bar{\vp}}(1) &= (1-\alpha^2) (\alpha \bar{z} + \alpha^2 \bar{z}^2 + \cdots) = S_0, \\
H_{\bar{\vp}}(z) &= P_{-}(1-\alpha^2) (\alpha + \alpha^2 \bar{z} + \cdots) =(1-\alpha^2)(\alpha^2 \bar{z} + \alpha^3 \bar{z}^2 + \cdots) = \alpha S_0.
\end{align*}
Thus, $H_{\bar{\vp}} f = (c_0 + \alpha c_1) S_0$.
Consequently
\begin{equation}
\|H_{\bar{\vp}} f\|^2 = |c_0 + \alpha c_1|^2 \|S_0\|^2,
\end{equation}
where

\[
\|S_0\|^2 = (1-\alpha^2)^2 \sum_{k=1}^\infty (\alpha^2)^k = (1-\alpha^2)^2 \frac{\alpha^2}{1-\alpha^2} = \alpha^2(1-\alpha^2).
\]

Next
\[
\ol{u\vp} = \bar{z}^2 [ -\alpha z + (1-\alpha^2) + \cdots ] = -\alpha \bar{z} + (1-\alpha^2)\bar{z}^2 + \cdots\in H^2_{-}.
\]
 Thus $T_{\ol{u\vp}}(1) = P(\ol{u\vp}) = 0$, and
\[
T_{\ol{u\vp}}(z) = P(\ol{u\vp} z) = P(-\alpha + (1 -\alpha^2)\bar{z} + \cdots) = -\alpha.
\]
So $T_{\ol{u\vp}} f = - \alpha c_1$. Therefore, 
\begin{equation}
\|T_{\ol{u\vp}} f\|^2=\alpha^2 |c_1|^2.
\end{equation}
Thus, we have the following minimum modulus
\[
m(D_\vp) = \sqrt{1 - \sup_{|c_0|^2+|c_1|^2=1} (\alpha^2|c_1|^2 +|c_0 + \alpha c_1|^2\alpha^2(1-\alpha^2) )} .
\]

\end{example}

\begin{example}
Let $u$ be a nonconstant inner function and consider the DTTO $D_{\vp}$ defined on $\clk_u^\perp$ with the symbol $\vp = u$. Since $u$ is an inner function, $|\vp|=1$ a.e. on $\T$, satisfying the unimodularity condition. We apply the Theorem \ref{thm:min-modulus-Dphi} to compute $m(D_\vp)$.

\[m(D_\vp) = \sqrt{1 - \|B_{\bar{u}}\|^2} = \sqrt{ 1 - \sup_{f \in \clk_u, \|f\|=1} \left( \|T_{\ol{u^2}} f\|^2 + \|H_{\bar{u}} f\|^2 \right) }.\]

Let $f \in \clk_u= H^2\cap (uH^2)^\perp$. Since $u^2H^2 \subset uH^2$, it follows that $f \perp u^2 H^2$. Since $M_u$ is an isometry, we have $\langle \bar{u}^2 f, h \rangle = \langle f, u^2 h \rangle = 0$ for all $h \in H^2$. This implies $\bar{u}^2 f \in H^2_-$, and thus $T_{\ol{u^2}} f = P(\bar{u}^2 f) = 0$ for all $f \in \clk_u$. Next, for any $f \in \clk_u$ with $\|f\|=1$, we have
\[\|H_{\bar{u}} f\|^2 = \|P_-(\bar{u}f)\|^2 = \|\bar{u}f\|^2 - \|P(\bar{u}f)\|^2 = 1 - \|P(\bar{u}f)\|^2=1.\]

Therefore,

\[m(D_\vp) = \sqrt{ 1 - \sup_{f \in \clk_u, \|f\|=1} \left(  1 - \|P(\bar{u}f)\|^2 \right) }=0.\]

\end{example}

\subsection{Some special symbol classes}

In this subsection, we provide explicit formulas for the minimum modulus of dual truncated Toeplitz operators under specific structural conditions on the symbol $\vp$. We begin by recalling standard properties of Toeplitz operators and their kernels.

\begin{proposition}\cite[Proposition 7.5]{Douglas1998} \label{prop:toeplitz-identities}
Let $\vp \in L^\infty$ and let $\psi, \bar{\theta} \in H^\infty$. Then the following algebraic identities hold:
\begin{enumerate}
    \item $T_\theta T_\vp = T_{\theta \vp}$;
    \item $T_\vp T_\psi = T_{\vp \psi}$.
\end{enumerate}
\end{proposition}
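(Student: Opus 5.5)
The plan is to prove (2) directly from the definition of $T_\vp$ as a compressed multiplication operator, and then to obtain (1) from (2) by taking adjoints. The underlying fact is that multiplication by an analytic symbol leaves $H^2$ invariant, while multiplication by a co-analytic symbol leaves $H^2_-$ invariant.

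For (2), let $\psi\in H^\infty$ and $f\in H^2$. The product $\psi f$ is analytic, so $\psi f\in H^2$, and therefore $T_\psi f = P(\psi f)=\psi f$. Applying $T_\vp$ gives
\[
T_\vp T_\psi f = P(\vp\,\psi f) = T_{\vp\psi} f,
\]
which is exactly (2). No further work is needed here.

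For (1), I will use the standard adjoint identity $T_\vp^* = T_{\bar\vp}$. It holds because, for $f,g\in H^2$,
\[
\langle P(\vp f), g\rangle = \langle \vp f, g\rangle = \langle f, \bar\vp g\rangle = \langle f, P(\bar\vp g)\rangle .
\]
Now let $\bar\theta\in H^\infty$. Applying (2) with the analytic symbol $\bar\theta$ in place of $\psi$ and $\bar\vp$ in place of $\vp$ gives $T_{\bar\vp}T_{\bar\theta}=T_{\bar\vp\bar\theta}$. Taking adjoints of both sides yields
\[
T_\theta T_\vp = (T_{\bar\vp}T_{\bar\theta})^* = (T_{\ol{\theta\vp}})^* = T_{\theta\vp}.
\]

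As a cross-check, (1) can also be verified directly. For $f\in H^2$, split $\vp f = P(\vp f)+P_-(\vp f)$, so that
\[
T_\theta T_\vp f = P(\theta\vp f) - P\bigl(\theta P_-(\vp f)\bigr).
\]
Since $\theta$ is the conjugate of an $H^\infty$ function, its Fourier coefficients vanish in positive degrees, so multiplication by $\theta$ maps $H^2_-=\ol{zH^2}$ into itself. Hence the second term is $P$ of an element of $H^2_-$, which is zero.

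I do not expect a genuine obstacle. The only point requiring care is the invariance of $H^2_-$ under multiplication by co-analytic symbols in the direct argument, and the adjoint route in the previous paragraph avoids it entirely.
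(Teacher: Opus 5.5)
Your proof is correct: (2) follows at once from $\psi f\in H^2$, and (1) follows from (2) applied to $\bar\vp$ and $\bar\theta$ together with $T_\vp^*=T_{\bar\vp}$. Your direct check of (1), via the invariance of $H^2_-$ under multiplication by $\theta$, is also valid. The paper gives no proof of its own and simply cites Douglas; your argument is the standard one for this fact.
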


\begin{proposition}\cite[Proposition 5.8]{Garcia2016} \label{prop:kernel-toeplitz}
If $\vp \in H^\infty \setminus \{0\}$ and $\eta$ is an inner factor of $\vp$, then the kernel of the adjoint operator satisfies $N(T^*_\vp) = \clk_\eta$.
\end{proposition}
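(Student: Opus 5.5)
The statement is that for $\vp\in H^\infty\setminus\{0\}$ with inner factor $\eta$, we have $N(T_\vp^*)=\clk_\eta$. I would prove this through the inner--outer factorization together with the identities of Proposition \ref{prop:toeplitz-identities}.

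First I would write $\vp=\eta F$, where $\eta$ is the inner factor and $F\in H^\infty$ is outer. Here "inner factor" is read as the full inner part; otherwise the statement fails. For example, with $\vp=z^2$ and $\eta=z$ we get $N(T_{\bar z^2})=\clk_{z^2}\neq\clk_z$. Since $T_\vp^*=T_{\bar\vp}=T_{\bar F\bar\eta}$ and $\ol{\bar F}=F\in H^\infty$, part (1) of Proposition \ref{prop:toeplitz-identities} (with $\theta=\bar F$) gives the factorization
\[T_\vp^*=T_{\bar F}T_{\bar\eta}.\]

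The inclusion $\clk_\eta\subseteq N(T_\vp^*)$ then follows directly. For $f\in\clk_\eta$ we have $f\perp\eta H^2$, so $\bar\eta f\perp H^2$; this is the same computation as in the example with $\vp=u$ above. Hence $T_{\bar\eta}f=0$, and so $T_\vp^*f=0$.

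For the reverse inclusion, I need $N(T_{\bar F})=\{0\}$ for outer $F$. Let $g\in H^2$ with $T_{\bar F}g=0$. Then for every polynomial $p$,
\[\langle g,Fp\rangle=\langle T_{\bar F}g,p\rangle=0.\]
By Beurling's theorem, $\{Fp\}$ is dense in $H^2$ because $F$ is outer, so $g=0$.

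Now take $f\in N(T_\vp^*)$. Then $T_{\bar F}(T_{\bar\eta}f)=0$, so $T_{\bar\eta}f=0$. This means $\langle f,\eta h\rangle=\langle T_{\bar\eta}f,h\rangle=0$ for all $h\in H^2$, so $f\perp\eta H^2$, that is, $f\in\clk_\eta$. The main step to check carefully is the injectivity of $T_{\bar F}$, which rests on Beurling's characterization of outer functions as the cyclic vectors for the shift.
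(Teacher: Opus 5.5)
Your proof is correct. The paper gives no argument of its own here; the statement is simply quoted from Proposition 5.8 of the cited monograph. You are right that ``inner factor'' must mean the full inner part of $\vp$ in its inner--outer factorization, as your example $\vp=z^2$, $\eta=z$ shows.

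The usual argument is a one-liner. One has $N(T_\vp^*)=R(T_\vp)^\perp=H^2\ominus\ol{\vp H^2}$, and $\ol{\vp H^2}=\eta\,\ol{FH^2}=\eta H^2$ because the outer function $F$ is cyclic for $S$ (Beurling). Hence $N(T_\vp^*)=H^2\ominus\eta H^2=\clk_\eta$. Your route through $T_\vp^*=T_{\bar F}T_{\bar\eta}$ uses exactly the same input, since injectivity of $T_{\bar F}=T_F^*$ is equivalent to density of $R(T_F)=FH^2$.

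Your version is a little longer, but it separates the two inclusions cleanly. It shows that $\clk_\eta\subseteq N(T_\vp^*)$ holds for \emph{every} inner divisor $\eta$ of $\vp$: write $\vp=\eta G$ with $G\in H^\infty$ arbitrary and use $T_\vp^*=T_{\bar G}T_{\bar\eta}$. The full inner factor is needed only for the reverse inclusion. That weaker inclusion is exactly what the paper uses in Corollary~\ref{cor:mDphi-analytic}, where $u$ is only assumed to divide the inner function $\vp$.
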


Our primary focus is the exact calculation of $m(D_\vp)$ using the formula derived in Theorem \ref{thm:min-modulus-Dphi}.
For a unimodular symbol $\vp$, and for all $f\in \clk_u$ with $\|f\|=1$, and using $1=\|\bar{\vp}f\|^2 = \|T_{\bar{\vp}}f\|^2 + \|H_{\bar{\vp}}f\|^2$, we have
\begin{equation}
\|T_{\ol{u\vp}} f\|^2 + \|H_{\bar{\vp}} f\|^2=\|T_{\ol{u\vp}} f\|^2 + 1 - \|T_{\bar{\vp}} f\|^2.
\end{equation}

The following corollary characterizes the cases where the general bounds collapse to exact equalities.

\begin{corollary} \label{cor:exact-equalities}
Let $u$ be a nonconstant inner function, and let $\vp \in L^\infty$ be unimodular. The following equalities hold:
\begin{enumerate}
    \item If $\vp$ is co-analytic ($\vp = \bar{\psi}$ for $\psi \in H^\infty$), then $m(D_{\bar{\psi}}) = \sqrt{1 - \|\restr{T_{\bar{u}\psi}}{\clk_u}\|^2}$.
    
    \item If $\restr{T_{\ol{u\vp}}}{\clk_u} = 0$ (e.g., if $\ol{u\vp} \in H^2_{-}$), then $m(D_\vp) = \sqrt{1 - \|\restr{H_{\bar{\vp}}}{\clk_u}\|^2}$.
    
    \item If there exists a unit vector $f_0 \in \clk_u$ that is a common maximizing vector for both $\|T_{\ol{u\vp}} f\|$ and $\|H_{\bar{\vp}} f\|$, then
    \[ m(D_\vp) = \sqrt{ 1 - \left( \|\restr{T_{\ol{u\vp}}}{\clk_u}\|^2 + \|\restr{H_{\bar{\vp}}}{\clk_u}\|^2 \right) }. \]
\end{enumerate}
\end{corollary}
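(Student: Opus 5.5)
All three parts follow from the exact formula \eqref{eq:mDphi} of Theorem \ref{thm:min-modulus-Dphi},
\[
m(D_\vp)^2=1-\sup_{f\in\clk_u,\|f\|=1}\bigl(\|T_{\ol{u\vp}}f\|^2+\|H_{\bar\vp}f\|^2\bigr).
\]
In each case I will show that one of the two terms vanishes on $\clk_u$, or that the supremum of the sum equals the sum of the suprema.

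For (1), take $\vp=\bar\psi$ with $\psi\in H^\infty$. Then $\bar\vp=\psi$ is analytic, so $H_{\bar\vp}f=P_-(\psi f)=0$ for every $f\in\clk_u\subset H^2$. This is the same observation as in Lemma \ref{lem:norm-Bphi}(2). Also $\ol{u\vp}=\bar u\psi$. The formula therefore reduces to
\[
m(D_{\bar\psi})=\sqrt{1-\sup_{\|f\|=1}\|T_{\bar u\psi}f\|^2}=\sqrt{1-\|\restr{T_{\bar u\psi}}{\clk_u}\|^2}.
\]
Alternatively, Lemma \ref{lem:norm-Bphi}(2) applied to the analytic symbol $\bar\vp=\psi$ gives $\|B_\psi\|=\|\restr{T_{\bar u\psi}}{\clk_u}\|$ directly, and Proposition \ref{prop:minmod-Dphi-norm-Bphi} finishes the argument.

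For (2), if $\restr{T_{\ol{u\vp}}}{\clk_u}=0$, the first term drops out and the supremum is exactly $\|\restr{H_{\bar\vp}}{\clk_u}\|^2$. The parenthetical example still needs a check. If $\ol{u\vp}\in H^2_-$ is meant in the sense that $\ol{u\vp}f\in H^2_-$ for all $f\in\clk_u$, this is immediate. If it is meant literally, i.e. only that the symbol $\ol{u\vp}$ itself lies in $H^2_-$, I will argue as in the example with $\vp=u$. One writes $\ol{u\vp}f$ and tests it against $H^2$ using $f\perp uH^2$. The cleanest route is to note that $P(\ol{u\vp}f)=0$ whenever $\langle f,u\vp h\rangle=0$ for all $h\in H^2$. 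This holds when $u\vp H^2\subset uH^2$, that is, when $\vp$ is analytic. I will state the example under that reading and verify it in the computation.

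For (3), let $f_0$ attain both $\|\restr{T_{\ol{u\vp}}}{\clk_u}\|$ and $\|\restr{H_{\bar\vp}}{\clk_u}\|$. Evaluating at $f_0$ shows the supremum in \eqref{eq:mDphi} is at least $\|\restr{T_{\ol{u\vp}}}{\clk_u}\|^2+\|\restr{H_{\bar\vp}}{\clk_u}\|^2$. The reverse inequality is the subadditivity bound already used for the lower estimate in \eqref{eq:mDphi-bounds}. Hence the supremum equals this sum, the lower bound in \eqref{eq:mDphi-bounds} is attained, and the max with $0$ is unnecessary because the sum is a supremum of quantities bounded by $\|\bar\vp f\|^2=1$.

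The only step that is not pure bookkeeping is justifying the example in (2). I expect that to be the main point requiring care.
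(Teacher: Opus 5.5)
Your proposal is correct and follows the paper's proof essentially line for line. In (1) the Hankel term drops because $\bar\vp=\psi$ is analytic, in (2) the Toeplitz term drops by hypothesis, and in (3) the common maximizer $f_0$ upgrades the subadditivity bound $\sup(F+G)\le\sup F+\sup G$ to an equality. Your caution about the parenthetical in (2) is well placed: the paper never checks it, and the literal reading is false. In the paper's own example with $u=z^2$ and $\vp=\bar z\,\frac{z-\alpha}{1-\alpha z}$ one has $\ol{u\vp}\in H^2_-$, yet $T_{\ol{u\vp}}z=-\alpha\neq 0$. So replacing that example by the condition $\ol{u\vp}\,\clk_u\subset H^2_-$, or by analyticity of $\vp$, is the right repair.
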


\begin{proof}
(1): If $\vp = \bar{\psi}$ with $\psi \in H^\infty$, then the symbol for the Hankel operator in \eqref{eq:mDphi} is $\bar{\vp} = \psi$. Since $\psi$ is analytic, $H_\psi = 0$, yielding the result.

(2): If $T_{\ol{u\vp}}|_{\clk_u} = 0$, the first term in the supremum of \eqref{eq:mDphi} vanishes. Hence the conclusion follows.

(3): Generally, $\sup(F+G) \leq \sup F + \sup G$. However, if there exists an $f_0$ such that $F(f_0) = \sup F$ and $G(f_0) = \sup G$, then $\sup(F+G) \geq F(f_0) + G(f_0) = \sup F + \sup G$. Thus, the inequality becomes an equality, and the norm $\|B_{\bar{\vp}}\|^2$ is exactly the sum of the squared norms of the restricted operators.
\end{proof}

We now consider the case where $\vp \in H^\infty$ is a unimodular function. 

\begin{corollary}\label{cor:mDphi-analytic}
Let $\vp$ be an inner function. Then the minimum modulus of $D_\vp$ is given by
\begin{equation}
m(D_\vp) = m\left(\restr{T_{\bar{\vp}}}{\clk_u}\right).
\end{equation}
Furthermore, if $u$ is an inner factor of $\vp$, then $m(D_\vp) = 0$.
\end{corollary}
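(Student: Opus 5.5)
The plan is to start from the exact formula \eqref{eq:mDphi} of Theorem \ref{thm:min-modulus-Dphi} and show that for inner $\vp$ the Toeplitz term vanishes identically on $\clk_u$. Since an inner $\vp$ is unimodular, Theorem \ref{thm:min-modulus-Dphi} applies and gives
\[
m(D_\vp)^2 = 1-\sup_{f\in\clk_u,\|f\|=1}\left(\|T_{\ol{u\vp}}f\|^2+\|H_{\bar\vp}f\|^2\right).
\]

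The key step is to prove that $T_{\ol{u\vp}}f=0$ for every $f\in\clk_u$. Both $\bar u$ and $\bar\vp$ are co-analytic. I would apply Proposition \ref{prop:toeplitz-identities}(1) with $\theta=\bar\vp$, which is allowed because $\bar\theta=\vp\in H^\infty$, to get
\[
T_{\ol{u\vp}} = T_{\bar\vp}T_{\bar u}.
\]
For $f\in\clk_u$ we have $f\perp uH^2$. Hence $\langle \bar u f,h\rangle=\langle f,uh\rangle=0$ for all $h\in H^2$, so $\bar u f\in H^2_-$ and $T_{\bar u}f=P(\bar u f)=0$. This is the same argument used in the example with $\vp=u$. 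It follows that $T_{\ol{u\vp}}|_{\clk_u}=0$, and we are in the setting of Corollary \ref{cor:exact-equalities}(2).

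Next I would use the unimodular identity recorded just before Corollary \ref{cor:exact-equalities}. For a unit vector $f\in\clk_u$,
\[
\|H_{\bar\vp}f\|^2 = 1-\|T_{\bar\vp}f\|^2.
\]
Substituting this into the formula turns the supremum into an infimum:
\[
m(D_\vp)^2 = 1-\sup_{\|f\|=1}\left(1-\|T_{\bar\vp}f\|^2\right)=\inf_{f\in\clk_u,\|f\|=1}\|T_{\bar\vp}f\|^2 = m\left(\restr{T_{\bar\vp}}{\clk_u}\right)^2.
\]
Taking square roots gives the first claim. Here the minimum modulus of $\restr{T_{\bar\vp}}{\clk_u}:\clk_u\to H^2$ is understood via the same infimum over unit vectors.

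For the second claim, suppose $\vp=u\theta$ with $\theta$ inner. Proposition \ref{prop:toeplitz-identities} again gives $T_{\bar\vp}=T_{\bar\theta}T_{\bar u}$, which annihilates $\clk_u$ by the argument above. Alternatively, Proposition \ref{prop:kernel-toeplitz} gives $N(T_{\bar\vp})=N(T_\vp^*)\supseteq\clk_u$. Since $u$ is nonconstant, $\clk_u\neq\{0\}$, so any unit vector of $\clk_u$ (for instance $k_0^u/\|k_0^u\|$) is sent to $0$, and $m(D_\vp)=0$. I expect no serious obstacle. The only delicate points are using the correct version of the Toeplitz product identity for co-analytic symbols and carrying out the sup/inf bookkeeping correctly.
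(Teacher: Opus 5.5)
Your proof is correct and is essentially the paper's argument: you show $T_{\ol{u\vp}}=T_{\bar\vp}T_{\bar u}$ vanishes on $\clk_u$, then use $\|H_{\bar\vp}f\|^2=1-\|T_{\bar\vp}f\|^2$ to turn the supremum in Theorem~\ref{thm:min-modulus-Dphi} into $\inf_{\|f\|=1}\|T_{\bar\vp}f\|^2$, and for the second claim you use that $\clk_u$ lies in the kernel of $T_{\bar\vp}$. Stating that last step as the inclusion $N(T_\vp^*)\supseteq\clk_u$ rather than as an equality, and noting that $\clk_u\neq\{0\}$, is slightly more careful than the paper's version.
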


\begin{proof}
Since $\vp \in H^\infty$, we have $T_{\ol{u\vp}} = T^*_{u\vp } = T^*_\vp T^*_u$ by Proposition \ref{prop:toeplitz-identities}. For any $f \in \clk_u$, we have $T^*_u f = P( \bar{u}f ) = 0$. Consequently, $T_{\ol{u\vp}} f = 0$ for all $f \in \clk_u$. Substituting this into \eqref{eq:mDphi}, we get
\begin{align*}
m(D_\vp) &= \sqrt{ 1 - \sup_{f \in \clk_u, \|f\|=1} \left( 1 - \|T_{\bar{\vp}} f\|^2 \right) } \\
&= \sqrt{ 1 - \left( 1 - \inf_{f \in \clk_u, \|f\|=1} \|T_{\bar{\vp}} f\|^2 \right) } \\
&= \sqrt{ \inf_{f \in \clk_u, \|f\|=1} \|T_{\bar{\vp}} f\|^2 } = m\left(\restr{T_{\bar{\vp}}}{\clk_u}\right).
\end{align*}
If $u$ is an inner factor of $\vp$, by Proposition \ref{prop:kernel-toeplitz}, it follows that $\restr{T_{\bar{\vp}}}{\clk_u} = 0$, hence $m(D_\vp) = 0$.
\end{proof}

\begin{corollary}
Let $\vp \in L^\infty$ be a unimodular function. Then $m(D_\vp) = 1$ if and only if $\vp$ is a constant of modulus one.
\end{corollary}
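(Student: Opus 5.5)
The plan is to combine Proposition \ref{prop:minmod-Dphi-norm-Bphi}, which gives $m(D_\vp)=\sqrt{1-\|B_{\bar\vp}\|^2}$, with Lemma \ref{lem:norm-Bphi}(1). Under these, $m(D_\vp)=1$ holds if and only if $\|B_{\bar\vp}\|=0$, that is, $B_{\bar\vp}=0$. By \eqref{eq:Bphi-decomp} this is in turn equivalent to $P_{\clk_u^\perp}(\bar\vp f)=0$ for every $f\in\clk_u$, i.e. $\bar\vp\,\clk_u\subseteq\clk_u$. So the statement reduces to the claim: for unimodular $\vp$, the inclusion $\bar\vp\,\clk_u\subseteq \clk_u$ forces $\vp$ to be constant.

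The easy direction goes first. If $\vp\equiv c$ with $|c|=1$, then $D_\vp=cI$ on $\clk_u^\perp$, so $m(D_\vp)=1$. (Equivalently, $B_{\bar c}=0$ because $\clk_u^\perp$ is invariant under scalar multiplication.)

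For the converse, set $\psi=\bar\vp$, which is unimodular and satisfies $\psi\clk_u\subseteq\clk_u$. The key steps are as follows.

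First, $k_0^u\in\clk_u$ and $k_0^u=1-\overline{u(0)}u$ is bounded below in modulus on $\T$ by $1-|u(0)|>0$. Hence $\psi k_0^u\in\clk_u\subset H^2$ gives $\psi=(\psi k_0^u)/k_0^u$. Since $1/k_0^u\in H^\infty$, this yields $\psi\in H^2\cap L^\infty=H^\infty$.

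Second, with $\psi\in H^\infty$, the inclusion $\psi\clk_u\subseteq\clk_u$ is exactly the hypothesis in the last assertion of Lemma \ref{lem:norm-Bphi}(2) (cf. \cite[Theorem 6.12]{Garcia2016}). That result gives that $\psi$ is constant, and unimodularity then makes it a constant of modulus one.

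If one prefers a self-contained argument for the second step, apply the conjugation $C_u$ on $\clk_u$. Since $C_u(\psi f)=\bar\psi\,C_uf$, the space $\clk_u$ is also invariant under multiplication by $\bar\psi$. Using $k_0^u$ again, $\bar\psi\in H^\infty$ as well, so $\psi$ is a constant.

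The main obstacle is the first step, namely passing from invariance of $\clk_u$ to analyticity of the symbol. The route is division by the outer, invertible-in-$H^\infty$ kernel $k_0^u$; this uses that $u$ is nonconstant, so $|u(0)|<1$. Once analyticity is secured, the result follows from Lemma \ref{lem:norm-Bphi} as described.
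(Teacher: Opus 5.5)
Your argument is correct and follows the route the paper's framework implies; the paper's own proof says only ``Follows easily''. Proposition~\ref{prop:minmod-Dphi-norm-Bphi} reduces $m(D_\vp)=1$ to $B_{\bar\vp}=0$, i.e.\ $\bar\vp\,\clk_u\subseteq\clk_u$, and the last assertion of Lemma~\ref{lem:norm-Bphi}(2) then gives constancy. That lemma is stated only for $H^\infty$ symbols, a point the paper never addresses, and your division by the invertible kernel $k_0^u=1-\ol{u(0)}u$ (with the $C_u$ argument giving $\vp,\bar\vp\in H^\infty$) supplies exactly that missing analyticity step.
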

\begin{proof}
    Follows easily.
\end{proof}

\subsection{Minimum modulus of \texorpdfstring{$B_\vp$}{Bv}}
This section is devoted to the study of the minimum modulus of the operator $B_\vp$, where $\vp\in L^\infty$ unimodular function, and in the case where $\vp$ is a bounded analytic function. We commence our analysis by recalling the following established result regarding the norm of a truncated Toeplitz operator with an analytic symbol (cf. \cite[Theorem 1.3 and Equation 2.9, p. 15]{Peller:Book}). For the sake of completeness, we provide a concise proof of this result.

\begin{lemma}\label{lem:Aphi-analytic}
Let $u$ be a nonconstant inner function, and let $\vp \in H^\infty$. The truncated Toeplitz operator $A_\vp: \clk_u \to \clk_u$, defined by $A_\vp f = P_{\clk_u}(\vp f)$. Then
\begin{equation}\label{eq:norm-Aphi}
\|A_\vp\| = \|H_{\bar{u}\vp}\| = \dist_{L^\infty}(\bar{u}\vp, H^\infty).
\end{equation}
In particular, $\|A_\vp\| = 0$ if and only if $\vp \in uH^\infty$.
\end{lemma}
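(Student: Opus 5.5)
The plan is to realize $A_\vp$ (up to unitary equivalence) as a Hankel operator with symbol $\bar u\vp$, and then appeal to Nehari's theorem for the distance formula. Fix $f\in\clk_u$. Since $\vp\in H^\infty$, we have $\vp f\in H^2$, so on $H^2$ the projection is $P_{\clk_u}=P-M_uPM_{\bar u}$. This gives
\[
A_\vp f=\vp f-uP(\bar u\vp f)=u\,\bigl(\bar u\vp f-P(\bar u\vp f)\bigr)=u\,P_-(\bar u\vp f)=u\,H_{\bar u\vp}f .
\]
Because $M_u$ is an isometry, $\|A_\vp f\|=\|H_{\bar u\vp}f\|$ for every $f\in\clk_u$, and hence $\|A_\vp\|=\|\restr{H_{\bar u\vp}}{\clk_u}\|$.

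Next, I would show that restricting to $\clk_u$ does not change the norm. For $g=uh\in uH^2$ we have $\bar u\vp g=\vp h\in H^2$, so $H_{\bar u\vp}$ vanishes on $uH^2$. Since $H^2=\clk_u\oplus uH^2$, it follows that $H_{\bar u\vp}=H_{\bar u\vp}P_{\clk_u}$, and therefore $\|H_{\bar u\vp}\|=\|\restr{H_{\bar u\vp}}{\clk_u}\|=\|A_\vp\|$. The second equality, $\|H_{\bar u\vp}\|=\dist_{L^\infty}(\bar u\vp,H^\infty)$, is Nehari's theorem (see \cite{Peller:Book}). I would quote it rather than reprove it; this is the only deep ingredient.

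For the last claim, $\|A_\vp\|=0$ holds if and only if $\bar u\vp\in H^\infty$, because the distance to the closed subspace $H^\infty$ vanishes exactly on $H^\infty$. Alternatively, $H_{\bar u\vp}=0$ forces $\bar u\vp=H_{\bar u\vp}$-free, i.e.\ $P_-(\bar u\vp\cdot1)=0$, so $\bar u\vp\in H^2\cap L^\infty=H^\infty$. The condition $\bar u\vp\in H^\infty$ is equivalent to $\vp=u(\bar u\vp)\in uH^\infty$, using $|u|=1$. The converse is immediate: if $\vp=u\psi$ with $\psi\in H^\infty$, then $\bar u\vp=\psi$ is analytic.

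I expect the main obstacle to be the bookkeeping in the first step, namely verifying the identity $P_{\clk_u}=P-M_uPM_{\bar u}$ on $H^2$ and the orthogonality that makes $M_u$ act isometrically. Nehari's theorem itself is cited, not proved.
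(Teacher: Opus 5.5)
Your proposal is correct and follows the paper's proof essentially line for line. You derive $A_\vp f = u\,H_{\bar u\vp}f$ from $P_{\clk_u}=P-M_uPM_{\bar u}$, observe that $H_{\bar u\vp}$ vanishes on $uH^2$ so restricting to $\clk_u$ loses no norm, invoke Nehari's theorem, and get the final claim from the closedness of $H^\infty$ in $L^\infty$. The phrase ``$\bar u\vp=H_{\bar u\vp}$-free'' is garbled and should simply read $P_-(\bar u\vp)=H_{\bar u\vp}1=0$, but your first argument via the distance already suffices.
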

\begin{proof}
Let $f \in \clk_u$. Since $H^2 = \clk_u \oplus uH^2$, we have $P_{\clk_u} = I_{H^2} - P_{uH^2}$. Therefore,
\[
A_\vp f = \vp f - P_{uH^2}(\vp f).
\]
 Since $\vp f \in H^2$, we have
\[
P_{uH^2}(\vp f) = M_u P(\bar{u} \vp f).
\]
This implies
\[
\begin{aligned}
A_\vp f &= u(\bar{u}\vp f) - u P(\bar{u}\vp f) \\
&= u \left( \bar{u}\vp f - P(\bar{u}\vp f) \right)\\
&= u \left((I - P)(\bar{u}\vp f) \right).
\end{aligned}
\]
Therefore,
\[
A_\vp f = M_u H_{\bar{u}\vp}f.
\]

We conclude that $\|A_\vp\| = \left\|\restr{H_{\bar{u}\vp}}{\clk_u}\right\|$. The Hankel operator $H_{\bar{u}\vp}$ vanishes on the subspace $uH^2$ because if $g = uh \in uH^2$, then $\bar{u}\vp g = \vp h \in H^2$, so $P_{-}(\vp h) = 0$. Since $H_{\bar{u}\vp}$ is zero on the orthogonal complement $\clk_u ^\perp$ of $\clk_u$ in $H^2$, we have
\[
\left\|\restr{H_{\bar{u}\vp}}{\clk_u}\right\| = \|H_{\bar{u}\vp}\|.
\]
Finally, Nehari's Theorem gives $\|H_{\bar{u}\vp}\| = \dist_{L^\infty}(\bar{u}\vp, H^\infty)$. The condition $\|A_\vp\|=0$ is equivalent to $\bar{u}\vp \in H^\infty$, or $\vp \in uH^\infty$.
\end{proof}

\begin{proposition}\label{prop:minmod:Bphi}
Let $\vp\in L^\infty$ such that $|\vp|=1$ a.e. on $\T$. The minimum modulus of the operator $B_\vp$ is given by
\begin{equation}\label{eq:minmod:Bphi}
m(B_\vp) = \sqrt{1 - \|A_\vp\|^2}.
\end{equation}
\end{proposition}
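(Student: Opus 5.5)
The argument parallels Proposition \ref{prop:minmod-Dphi-norm-Bphi}, with $B_\vp$ in the role of $D_\vp$ and $A_\vp$ in the role of $B_{\bar\vp}^*$. The key input is the unimodularity of $\vp$: $M_\vp$ is an isometry on $L^2$, and we use the orthogonal splitting $L^2=\clk_u\oplus\clk_u^\perp$.

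Fix $f\in\clk_u$ with $\|f\|=1$. Since $|\vp|=1$ a.e., we have $\|\vp f\|=\|f\|=1$. Decompose $\vp f = P_{\clk_u}(\vp f) + P_{\clk_u^\perp}(\vp f) = A_\vp f + B_\vp f$. By the Pythagorean identity,
\[
\|B_\vp f\|^2 = \|\vp f\|^2 - \|P_{\clk_u}(\vp f)\|^2 = 1-\|A_\vp f\|^2 .
\]
Equivalently, compressing $M_\vp^*M_\vp=I$ to the $(1,1)$ entry gives \eqref{eq:Bphi-Aphi}, namely $B_\vp^*B_\vp=I_{\clk_u}-A_\vp^*A_\vp$. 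The displayed identity is then this operator identity tested against $f$.

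Taking the infimum over unit vectors $f\in\clk_u$ gives
\[
m(B_\vp)^2=\inf_{\|f\|=1}\|B_\vp f\|^2 = 1-\sup_{\|f\|=1}\|A_\vp f\|^2 = 1-\|A_\vp\|^2 .
\]
Taking the nonnegative square root, which is legitimate because $\|A_\vp\|\le\|\vp\|_\infty=1$, yields \eqref{eq:minmod:Bphi}. An alternative route is Proposition \ref{prop:basic-formulas}(5): $m(B_\vp)^2=\inf\sigma(B_\vp^*B_\vp)=1-\sup\sigma(A_\vp^*A_\vp)=1-\|A_\vp\|^2$, using that $A_\vp^*A_\vp$ is positive, so the top of its spectrum equals its norm.

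There is no real obstacle here. The only point needing care is that the minimum modulus is computed over the domain $\clk_u$ of $B_\vp$, not over the codomain $\clk_u^\perp$. This is exactly why the formula involves $\|A_\vp\|$, whereas the formula for $m(D_\vp)$ involved the norm of the off-diagonal block $B_{\bar\vp}$.

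As a remark following the proof, one could combine this with Lemma \ref{lem:Aphi-analytic} to obtain $m(B_\vp)=\sqrt{1-\dist_{L^\infty}(\bar u\vp,H^\infty)^2}$ when $\vp$ is inner.
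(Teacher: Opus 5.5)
Your proof is correct and is essentially the paper's argument. The paper tests the operator identity $B_\vp^*B_\vp = I_{\clk_u} - A_\vp^*A_\vp$ from \eqref{eq:Bphi-Aphi} against unit vectors $f\in\clk_u$ and takes the infimum, which is exactly your Pythagorean computation $\|B_\vp f\|^2 = 1-\|A_\vp f\|^2$ followed by $\sup_{\|f\|=1}\|A_\vp f\|^2=\|A_\vp\|^2$.
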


\begin{proof}
The square of the minimum modulus of $B_\vp$ is given by 
\[ m(B_\vp)^2 = \inf_{f \in \clk_u, \|f\|=1} \langle B_\vp^* B_\vp f, f \rangle. \]
From \eqref{eq:Bphi-Aphi}, we have
\[ m(B_\vp)^2 = \inf_{\|f\|=1} \langle (I_{\clk_u} - A_\vp^* A_\vp) f, f \rangle = \inf_{\|f\|=1} (1 - \|A_\vp f\|^2). \]
We obtain
\[ m(B_\vp)^2 = 1 - \|A_\vp\|^2. \]
Taking the square root yields the desired result.
\end{proof}

\begin{theorem}\label{thm:min-modulus-Bphi}
Let $u$ be a nonconstant inner function, and let $\vp \in H^\infty$. The minimum modulus of the operator $B_\vp: \clk_u \to \clk_u^\perp$ satisfies the following properties:
\begin{enumerate}
    \item $m(B_\vp) = m\left(\restr{T_{\bar{u}\vp}}{\clk_u}\right) \ge m(T_{\bar{u}\vp}).$
    
\item \begin{equation}
    m(B_\vp) \ge \sqrt{ (\essinf_{z \in \T} |\vp(z)|)^2 - \dist_{L^\infty}(\bar{u}\vp, H^\infty)^2 }.
    \end{equation}
    Furthermore, if $\vp$ is an inner function, then
    \begin{equation}
    m(B_\vp) = \sqrt{ 1 - \dist_{L^\infty}(\bar{u}\vp, H^\infty)^2 }.
    \end{equation}
\end{enumerate}
\end{theorem}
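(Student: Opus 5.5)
For $\vp\in H^\infty$, Lemma~\ref{lem:norm-Bphi}(2) gives the factorization $B_\vp=M_u\restr{T_{\bar u\vp}}{\clk_u}$. Since $M_u$ is an isometry, $\|B_\vp f\|=\|T_{\bar u\vp}f\|$ for every $f\in\clk_u$. Taking the infimum over unit vectors $f\in\clk_u$ gives $m(B_\vp)=m(\restr{T_{\bar u\vp}}{\clk_u})$. The inequality $m(\restr{T_{\bar u\vp}}{\clk_u})\ge m(T_{\bar u\vp})$ holds because an infimum over the unit sphere of the subspace $\clk_u$ is at least the infimum over the unit sphere of all of $H^2$. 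This proves (1).

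For (2), I will compute $\|B_\vp f\|^2$ directly for a unit vector $f\in\clk_u$. Since $\vp f\in H^2$, the decomposition $H^2=\clk_u\oplus uH^2$ gives
\[
\|\vp f\|^2=\|P_{\clk_u}(\vp f)\|^2+\|P_{uH^2}(\vp f)\|^2=\|A_\vp f\|^2+\|B_\vp f\|^2,
\]
where the last step uses $B_\vp f=P_{uH^2}(\vp f)$ from Lemma~\ref{lem:norm-Bphi}(2). We also have $\|\vp f\|^2=\int_\T|\vp|^2|f|^2\,d\mu\ge(\essinf|\vp|)^2$. Hence
\[
\|B_\vp f\|^2\ge(\essinf|\vp|)^2-\|A_\vp\|^2 .
\]
By Lemma~\ref{lem:Aphi-analytic}, $\|A_\vp\|=\dist_{L^\infty}(\bar u\vp,H^\infty)$. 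Taking the infimum over $f$ gives the stated lower bound, read as $m(B_\vp)\ge 0$ in the trivial case where the radicand is negative; I will note this caveat.

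When $\vp$ is inner, $|\vp|=1$ a.e., so Proposition~\ref{prop:minmod:Bphi} gives $m(B_\vp)=\sqrt{1-\|A_\vp\|^2}$ exactly. Equivalently, the inequality above becomes the equality $\|B_\vp f\|^2=1-\|A_\vp f\|^2$, whose infimum over unit vectors is $1-\|A_\vp\|^2$. Substituting Nehari's formula from Lemma~\ref{lem:Aphi-analytic} gives $m(B_\vp)=\sqrt{1-\dist_{L^\infty}(\bar u\vp,H^\infty)^2}$.

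There is no real obstacle. The only points needing care are the orthogonal splitting of $\vp f$ inside $H^2$, which requires $\vp$ analytic, and remembering that $\|A_\vp\|$ is identified with the Hankel norm and hence with the Nehari distance.
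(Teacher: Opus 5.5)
Your proof is correct and matches the paper's argument: part (1) uses the isometric factorization $B_\vp=M_u\restr{T_{\bar u\vp}}{\clk_u}$, and part (2) uses the orthogonal splitting $\|\vp f\|^2=\|A_\vp f\|^2+\|B_\vp f\|^2$ together with Lemma~\ref{lem:Aphi-analytic}; in the inner case the bound becomes an identity. Your caveat that the radicand in (2) can be negative, so the bound should then be read as $m(B_\vp)\ge 0$, is a fair point that the paper's statement omits.
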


\begin{proof}
Let $\vp\in H^\infty$. Then

(1): By the operator identity $B_\vp = M_u \restr{T_{\bar{u}\vp}}{\clk_u}$ established in Lemma \ref{lem:norm-Bphi}, the minimum modulus is given by
\[ m(B_\vp) = \inf_{f \in \clk_u, \|f\|=1} \|M_u T_{\bar{u}\vp} f\|. \]
We obtain $m(B_\vp) = \inf_{f \in \clk_u, \|f\|=1} \|T_{\bar{u}\vp} f\|=m\left(\restr{T_{\bar{u}\vp}}{\clk_u}\right)$. The inequality $m(B_\vp) \geq m(T_{\bar{u}\vp})$ follows because the infimum over the subset $\clk_u$ is bounded below by the infimum over the full space $H^2$.

(2): Let $f\in \clk_u$. Since $M_\vp f =P_{\clk_u}(\vp f)+P_{\clk_u^\perp}(\vp f)= A_\vp f + B_\vp f$, we have $\|B_\vp f\|^2 = \|\vp f\|^2 - \|A_\vp f\|^2$. Taking the infimum over unit vectors $f \in \clk_u$, we obtain

\[ m(B_\vp)^2 = \inf_{f \in \clk_u, \|f\|=1} \left( \|\vp f\|^2 - \|A_\vp f\|^2 \right). \]
Using the property $\inf(X - Y) \ge \inf X - \sup Y$, we have
\[ m(B_\vp)^2 \ge \left( \inf_{\|f\|=1} \|\vp f\|^2 \right) - \|A_\vp\|^2. \]
Applying the pointwise bound $\inf \|\vp f\|^2 \ge (\essinf |\vp|)^2$ and the distance formula $\|A_\vp\| = \dist_{L^\infty}(\bar{u}\vp, H^\infty)$, we obtain the stated lower bound. If $\vp$ is inner, then $\|\vp f\| = 1$ for all unit vectors, and the inequality becomes an exact identity.
\end{proof}

\begin{corollary} \label{cor:shift-Bz}
Let $u$ be a nonconstant inner function. The minimum modulus of the operator $B_z: \clk_u \to \clk_u^\perp$ is given by
\[
m(B_z)=
\begin{cases}
0, & \dim \clk_u>1,\\ 
\sqrt{1-|u(0)|^2}, & \dim \clk_u=1.
\end{cases}
\]
\end{corollary}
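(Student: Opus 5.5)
The plan is to specialize Proposition~\ref{prop:minmod:Bphi} to the symbol $\vp(z)=z$. This symbol is unimodular, so that proposition gives
\[
m(B_z)=\sqrt{1-\|A_z\|^2}.
\]
By definition $A_z=P_{\clk_u}\restr{M_z}{\clk_u}$ is exactly the compressed shift $S_u$. The whole problem therefore reduces to computing $\|S_u\|$, equivalently $\|S_u^*S_u\|=\max\sigma(S_u^*S_u)$, and for that I will use the first defect identity in \eqref{eq:defect-identities}:
\[
S_u^*S_u=I_{\clk_u}-S^*u\otimes S^*u .
\]
From the proof of Theorem~\ref{thm:minmod-compressed-shift} we already know $\|S^*u\|^2=\|k_0^u\|^2=1-|u(0)|^2$. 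This quantity is nonzero because $u$ is nonconstant, so $S^*u\neq 0$.

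\textbf{Case $\dim\clk_u>1$.} There is then a unit vector $f\in\clk_u$ with $f\perp S^*u$. For such $f$ the identity gives $S_u^*S_uf=f$, so $\|S_uf\|=1$. Since $S_u$ is a contraction, this yields $\|S_u\|=1$. This is consistent with Lemma~\ref{lem:spectrum-perturbation}, which gives $\sigma(S_u^*S_u)=\{1,|u(0)|^2\}$. Plugging $\|A_z\|=1$ into the formula gives $m(B_z)=0$.

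\textbf{Case $\dim\clk_u=1$.} This is the step needing care. Lemma~\ref{lem:spectrum-perturbation} as stated would still list $\alpha=1$ in the spectrum, but that is false in dimension one, because there is no nonzero vector orthogonal to $x=S^*u$. Instead I argue directly. Since $0\neq S^*u\in\clk_u$ and the space is one-dimensional, $S^*u$ spans $\clk_u$. Writing $e=S^*u/\|S^*u\|$, we have $S^*u\otimes S^*u=\|S^*u\|^2\,e\otimes e=\|S^*u\|^2 I_{\clk_u}$. Hence
\[
S_u^*S_u=\bigl(1-\|S^*u\|^2\bigr)I_{\clk_u}=|u(0)|^2 I_{\clk_u},
\]
so $\|A_z\|=\|S_u\|=|u(0)|$. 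Substituting gives $m(B_z)=\sqrt{1-|u(0)|^2}$.

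The only real obstacle is the dimension distinction. The eigenvalue $1$ of $S_u^*S_u$ exists precisely when $\clk_u$ contains a nonzero vector orthogonal to $S^*u$, which happens if and only if $\dim\clk_u\ge 2$. Once that is checked, everything else follows directly from Proposition~\ref{prop:minmod:Bphi} and \eqref{eq:defect-identities}.
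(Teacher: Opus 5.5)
Your proof is correct. It shares the paper's first step, specializing Proposition~\ref{prop:minmod:Bphi} to $\vp(z)=z$ to get $m(B_z)=\sqrt{1-\|A_z\|^2}$ with $A_z=S_u$. After that the routes diverge. The paper obtains $\|A_z\|$ by quoting \cite[Theorem 3.3]{Bala2021}. You instead compute $\|S_u\|$ directly from the defect identity $S_u^*S_u=I_{\clk_u}-S^*u\otimes S^*u$ in \eqref{eq:defect-identities} together with $\|S^*u\|^2=1-|u(0)|^2$. You then split according to whether $\clk_u$ contains a unit vector orthogonal to $S^*u$.

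Your argument is self-contained and uses only tools already set up in the paper. In the one-dimensional case it yields $\|A_z\|=|u(0)|$ directly, in exactly the form the statement requires.

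It also handles correctly the point you flag. Lemma~\ref{lem:spectrum-perturbation} is false when the underlying space is one-dimensional, because then $\alpha$ is not in the spectrum. Applying it naively here would give $\|A_z\|=1$ and the wrong answer $m(B_z)=0$. The same slip appears in the proof of Theorem~\ref{thm:minmod-compressed-shift}, where it is harmless because only the smaller spectral value matters.

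The paper's proof is shorter. However, it delegates to the external reference precisely the step where this dimension distinction lives.
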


\begin{proof}
Since $\vp(z) = z$ is an inner function, from Proposition \ref{prop:minmod:Bphi}, we have
\[ m(B_z) = \sqrt{1 - \|A_z\|^2}, \]
where $A_z = \restr{P_{\clk_u} M_z}{\clk_u}$ is the compressed shift operator on the model space. Using \cite[Theorem 3.3]{Bala2021}, the conclusion follows.
\end{proof}

\begin{remark}
If $\dim \clk_u=1$, using Theorem \ref{thm:minmod-compressed-shift} and Corollary \ref{cor:shift-Bz}, it follows that $m(B_z)^2+m(A_z)^2=1$. 
\end{remark}

For any $\lambda \in \D$, the Blaschke factor $b_\lambda$ is defined as
\[b_{\lambda}(z) = \frac{z - \lambda}{1 - \bar{\lambda}z}, \quad z \in \D.\]

\begin{proposition} \label{prop:dim-Bz-zero}
Let $u$ be a nonconstant inner function. Then $m(B_z) = 0$ if and only if $\dim \clk_u > 1$.
\end{proposition}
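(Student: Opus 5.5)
We have to show that $m(B_z)=0$ exactly when $\dim\clk_u>1$. By Proposition \ref{prop:minmod:Bphi} applied to the inner symbol $\vp(z)=z$, we have $m(B_z)=\sqrt{1-\|A_z\|^2}$, where $A_z=S_u$ is the compressed shift. So $m(B_z)=0$ if and only if $\|S_u\|=1$. The plan is therefore to prove that $\|S_u\|=1$ exactly when $\dim\clk_u>1$, working directly from the defect identity \eqref{eq:defect-identities} rather than citing \cite{Bala2021}.

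From $I_{\clk_u}-S_u^*S_u=S^*u\otimes S^*u$ we see that $S_u^*S_u$ equals the identity on the orthogonal complement of $S^*u$ in $\clk_u$. Here $S^*u\neq 0$ because $u$ is nonconstant.

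\emph{Case $\dim\clk_u>1$.} The orthogonal complement of $\mathrm{span}\{S^*u\}$ in $\clk_u$ is nonzero. Pick a unit vector $f$ in it. Then $\|S_uf\|^2=\langle S_u^*S_uf,f\rangle=\|f\|^2-|\langle f,S^*u\rangle|^2=1$, so $\|S_u\|\ge 1$. Since $S_u$ is a contraction, $\|S_u\|=1$, and hence $m(B_z)=0$.

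\emph{Case $\dim\clk_u=1$.} Then $\clk_u=\mathrm{span}\{S^*u\}$. By Lemma \ref{lem:spectrum-perturbation} and the computation in Theorem \ref{thm:minmod-compressed-shift}, $\sigma(S_u^*S_u)=\{|u(0)|^2\}$, because the eigenvalue $1$ disappears when the orthogonal complement of $S^*u$ is trivial. So $\|S_u\|=|u(0)|<1$, since $u$ is nonconstant. It follows that $m(B_z)=\sqrt{1-|u(0)|^2}>0$, in agreement with Corollary \ref{cor:shift-Bz}.

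Combining the two cases gives the equivalence. The only point needing care is the second case: Lemma \ref{lem:spectrum-perturbation} as stated lists both eigenvalues $1$ and $1-\|x\|^2$, but when $H$ is one-dimensional the value $1$ is not in the spectrum. Alternatively, one can note that $u$ is then a single Blaschke factor $c\,b_\lambda$, so $\clk_u=\mathrm{span}\{k_\lambda\}$ and $S_u$ is multiplication by the scalar $\lambda$. This gives $\|S_u\|=|\lambda|=|u(0)|<1$ directly.
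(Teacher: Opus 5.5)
Your proof is correct, and its first step is the same as the paper's: both use Proposition \ref{prop:minmod:Bphi} to reduce the question to deciding when $\|A_z\|=\|S_u\|=1$. You differ in how that equivalence is obtained.

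\begin{itemize}
\item \textbf{The paper.} It cites \cite[Theorem 3.3]{Bala2021} for $\|A_z\|=1 \iff \dim\clk_u>1$. In the one-dimensional case it identifies $u$ as a single Blaschke factor $b_\lambda$ and uses $\|A_z\|=|\lambda|<1$.
\item \textbf{Your proof.} You derive the equivalence from the rank-one defect identity $I_{\clk_u}-S_u^*S_u=S^*u\otimes S^*u$, which the paper already records in \eqref{eq:defect-identities}. Any unit vector of $\clk_u$ orthogonal to $S^*u$ is mapped isometrically, so $\|S_u\|=1$ when $\dim\clk_u>1$. When $\clk_u=\mathrm{span}\{S^*u\}$, the operator $S_u^*S_u$ is the scalar $|u(0)|^2$, so $\|S_u\|=|u(0)|<1$.
\end{itemize}

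Your route is self-contained within the paper's own tools. It also gives the explicit value $\|S_u\|=|u(0)|$ without identifying $u$ as a Blaschke factor; this agrees with the paper's $|\lambda|$, since $|b_\lambda(0)|=|\lambda|$. The paper's version is shorter only because it outsources the key step.

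Your caveat about Lemma \ref{lem:spectrum-perturbation} is correct. If $\dim H=1$, then $\alpha\notin\sigma(\alpha I+\beta\,x\otimes x)$, so the lemma as stated needs $\dim H\ge 2$. This does not affect the infimum computation in Theorem \ref{thm:minmod-compressed-shift}.
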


\begin{proof}
From \eqref{eq:minmod:Bphi}, we have $m(B_z)^2 = 1 - \|A_z\|^2$. The condition $m(B_z) = 0$ is equivalent to $\|A_z\| = 1$. By \cite[Theorem 3.3]{Bala2021}, we have $\|A_z\| = 1$ if and only if $\dim \clk_u>1$. In the case where $\dim \clk_u = 1$, the inner function is a single Blaschke factor $b_\lambda$, and $\|A_z\| = |\lambda| < 1$, which implies $m(B_z) > 0$.
\end{proof}

\end{document}